\theoremstyle{plain}
\newtheorem{theorem}{Theorem}[section]
\newtheorem{lemma}[theorem]{Lemma}
\newtheorem{proposition}[theorem]{Proposition}
\newtheorem{remark}[theorem]{Remark}
\theoremstyle{definition}
\newtheorem{definition}[theorem]{Definition}
\newtheorem{example}[theorem]{Example}
\DeclareMathOperator{\Betti}{Betti}
\renewcommand{\epsilon}{\varepsilon}
\newcommand{\N}{{\mathbb N}}
\title{Longest and Shortest Factorizations in Embedding Dimension Three}
\author{Baian Liu, JiaYan Yap}
\begin{document}
	\maketitle
	
	\begin{abstract}
		For a numerical monoid $\langle n_1, \dots, n_k \rangle$ minimally generated by $n_1, \dots, n_k \in \N$ with $n_1 < \cdots < n_k$, the longest and shortest factorization lengths of an element $x$, denoted as $L(x)$ and $\ell(x)$, respectively, follow the identities $L(x+n_1) = L(x) + 1$ and $\ell(x+n_k) = \ell(x) + 1$ for sufficiently large elements $x$. We characterize when these identities hold for all elements of numerical monoids of embedding dimension three.
	\end{abstract}

\section{Introduction}

\indent\indent Let $\N$ be the set of natural numbers including 0. We will study factorizations on \textbf{numerical monoids}, which are cofinite submonoids of $\N$. Numerical monoids can be presented using $n_1, \dots, n_k \in \N$ such that $\gcd(n_1, \dots, n_k) = 1$ as generators:
\[
	\langle n_1, \dots, n_k \rangle = \{a_1n_1+ \cdots + a_kn_k \mid a_1, \dots, a_k \in \N \}.
\]
The \textbf{embedding dimension} of a numerical monoid $M$ is the minimum $k \in \N$ such that there exists $n_1, \dots, n_k \in \N$ so that $M = \langle n_1, \dots, n_k \rangle$. 

Each element of $\langle n_1, \dots, n_k \rangle$ can be obtained using different $\N$-linear combinations of the generators $n_1, \dots, n_k$. We collect this information in a set and study certain invariants on it. 

\begin{definition}
	Let $\langle n_1, \dots, n_k \rangle$ be a numerical monoid minimally generated by $n_1, \dots, n_k \in \N$ with $n_1 < \cdots < n_k$. Take an element $x \in \langle n_1, \dots, n_k \rangle$. We define the \textbf{set of factorizations of $x$} to be
	\[
		\mathsf{Z}(x) = \{(a_1, \dots, a_k) \in \N^k \mid a_1n_1 + \cdots + a_kn_k = x \}. 
	\]
	For each factorization $(a_1, \dots, a_k) \in \mathsf{Z}(x)$, we define its \textbf{length} to be
	\[
		\abs{(a_1, \dots, a_k)} = a_1 + \cdots + a_k. 
	\]
	The \textbf{set of factorization lengths} for $x$ are then defined to be
	\[
		\mathcal{L}(x) = \{\abs{(a_1, \dots, a_k)} \mid (a_1, \dots, a_k) \in \mathsf{Z}(x) \}.
	\]
	Then the \textbf{maximum factorization length of $x$ }and the \textbf{minimum factorization length of $x$ }is defined as
	\[
		L(x) = \max \mathcal{L}(x) \text{\quad and \quad} \ell(x) = \min \mathcal{L}(x),
	\]
	respectively.
\end{definition}

The invariants $L(x)$ and $\ell(x)$ satisfy a linear-like formula given that $x$ is a sufficiently large element of the numerical monoid, as provided by the following result. 

\begin{theorem}\cite[Theorems 4.2 and 4.3]{barron2017set}
	Let $\langle n_1, \dots, n_k \rangle$ be a numerical monoid with $n_1 < \cdots < n_k$. The following statements hold.
	\begin{enumerate}
		\item 	 If $x > (n_1-1)n_k-n_1$, then $L(x+n_1) = L(x) + 1$.
		\item 	If $x > (n_{k}-1)n_{k-1} - n_k$, then $\ell(x+n_k) = \ell(x) + 1$.
	\end{enumerate}
\end{theorem}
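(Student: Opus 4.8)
The plan is to prove each identity as a two-sided inequality, treating the two statements as mirror images under the order-reversing correspondence that swaps the smallest generator $n_1$ with the largest $n_k$ and the maximum length $L$ with the minimum length $\ell$. I would write out statement (1) in full and obtain (2) by the analogous argument. The easy direction needs no hypothesis on $x$: appending one copy of $n_1$ to a length-maximal factorization of $x$ gives a factorization of $x+n_1$ of length $L(x)+1$, so $L(x+n_1)\ge L(x)+1$ always, and symmetrically $\ell(x+n_k)\le \ell(x)+1$ always. So the content lies in the reverse inequality, and for (1) it suffices to show that when $y := x+n_1 > (n_1-1)n_k$, some length-maximal factorization of $y$ uses $n_1$ at least once; deleting that copy then certifies $L(y)\le L(y-n_1)+1 = L(x)+1$.

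The core step is a pigeonhole argument on partial sums. Suppose toward a contradiction that a length-maximal factorization $z=(a_1,\dots,a_k)$ of $y$ has $a_1=0$, and set $m=|z|=\sum_{j\ge 2}a_j$. Since every generator appearing is at most $n_k$, we have $y=\sum_{j\ge 2}a_j n_j \le m\,n_k$, hence $m \ge y/n_k > n_1-1$, so $m\ge n_1$. Listing the $m$ generators of $z$ with multiplicity as $g_1,\dots,g_m$, I would form the $m+1\ge n_1+1$ partial sums $s_0=0,\ s_i=g_1+\cdots+g_i$; by pigeonhole two of them agree modulo $n_1$, so some block $g_{a+1}+\cdots+g_b$ equals a positive multiple $c\,n_1$. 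Because each $g_i\ge n_2>n_1$, this block satisfies $c\,n_1\ge (b-a)n_2>(b-a)n_1$, so $c>b-a$, and replacing those $b-a$ generators by $c$ copies of $n_1$ fixes the value $y$ while strictly increasing the length — contradicting the maximality of $z$. Hence $a_1\ge 1$, which finishes (1).

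For (2) I would run the identical scheme with $n_k$ in place of $n_1$: a length-minimal factorization with $a_k=0$ has $m=\sum_{i<k}a_i \ge y/n_{k-1} > n_k-1$ once $y > (n_k-1)n_{k-1}$, the partial-sum pigeonhole modulo $n_k$ produces a block equal to $c\,n_k$ with $c<b-a$ (now using $g_i\le n_{k-1}<n_k$), and replacing that block by $c$ copies of $n_k$ strictly shortens the factorization, forcing $a_k\ge 1$.

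The only delicate points are the bookkeeping ones, and they are precisely what pin down the stated constants. First, the lower bound on the count $m$ must be extracted with the correct generator — $n_k$ for the maximum-length statement (yielding the threshold $(n_1-1)n_k$) and $n_{k-1}$ for the minimum-length statement (yielding $(n_k-1)n_{k-1}$) — so I would take care that the hypothesis on $x$ translates into exactly $m\ge n_1$ (resp. $m\ge n_k$), which is what makes the $m+1$ partial sums overflow the residues modulo $n_1$ (resp. $n_k$). Second, one must confirm that the block-replacement moves the length in the intended direction, which reduces to the strict inequalities $n_2>n_1$ and $n_{k-1}<n_k$ guaranteed by the increasing ordering of the generators, together with the observation that the extracted block is a genuinely positive multiple of the exchanged generator. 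I do not anticipate any obstruction beyond verifying these inequalities.
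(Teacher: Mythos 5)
This theorem is imported from \cite{barron2017set}; the present paper states it without proof, so there is no internal argument to compare against, and your proposal must stand on its own --- which it does. Your outer reduction (the inequality $L(x)+1 \le L(x+n_1)$ is automatic, and equality follows once a length-maximal factorization of $x+n_1$ uses $n_1$, by deleting that copy) is exactly what the paper later isolates as Lemma \ref{Lem:Non1}. The substantive content you supply is the trading argument, and it is correct with the stated thresholds: if a factorization of $y = x+n_1 > (n_1-1)n_k$ avoids $n_1$, then $y \le m n_k$ forces $m \ge n_1$ parts, the $m+1$ prefix sums collide modulo $n_1$, the resulting block sums to $cn_1$ with $c > b-a$ because every part is at least $n_2 > n_1$, and swapping the block for $c$ copies of $n_1$ strictly increases the length, contradicting maximality; the dual argument for $\ell$ runs with $n_k$, where $g_i \le n_{k-1} < n_k$ gives $c < b-a$ and the swap strictly shortens, contradicting minimality. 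Both thresholds ($m \ge n_1$ from dividing by $n_k$, and $m \ge n_k$ from dividing by $n_{k-1}$) are extracted with the correct generator, so the hypotheses $x > (n_1-1)n_k - n_1$ and $x > (n_k-1)n_{k-1} - n_k$ are used exactly where needed. The only point I would make explicit in a final write-up is that your contradiction in fact shows \emph{every} length-extremal factorization of $x+n_1$ (resp.\ $x+n_k$) must use the exchanged generator, which is precisely what licenses the deletion step; as written, the phrase ``some length-maximal factorization'' slightly undersells what the argument proves.
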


These formulas hold for sufficiently large elements, but for small elements, the formulas can still hold in certain numerical monoids. We want to find some conditions for a numerical monoid $M$ under which the equality $L(x+n_1) = L(x) + 1$ holds for all $x \in M$. Similarly, we want to see when $\ell(x+n_k) = \ell(x) + 1$ holds for all $x \in M$. This question was posed in \cite[p. 334]{chapman2018factoring}.

If we first restrict ourselves to a numerical monoid $M$ of embedding dimension two, we find that the equalities $L(x+n_1) = L(x) + 1$ and $\ell(x+n_k) = \ell(x) + 1$ always hold for all $x \in M$ \cite[p. 334]{chapman2018factoring}. 

\begin{proposition}
	Let $M = \langle n_1, n_2 \rangle$ be a numerical monoid of embedding dimension two with $n_1 < n_2$. Then $L(x+n_1) = L(x) + 1$ and $\ell(x+n_2) = \ell(x) + 1$ for all $x \in M$. 
\end{proposition}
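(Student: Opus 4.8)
The plan is to exploit the rigidity of factorizations when there are only two generators. The single relation $n_2\cdot n_1 = n_1\cdot n_2$ generates all relations between $n_1$ and $n_2$, so I expect every factorization of a fixed $x \in M$ to arise from any one factorization by repeatedly performing the trade that removes $n_2$ copies of $n_1$ in exchange for $n_1$ copies of $n_2$ (or its inverse). My first step is therefore to prove the explicit description: if $(a_1,a_2)$ is any factorization of $x$, then
\[
	\mathsf{Z}(x) = \{(a_1 + t n_2,\, a_2 - t n_1) \mid t \in \Z\} \cap \N^2,
\]
the verification being the identity $(a_1 + t n_2)n_1 + (a_2 - t n_1)n_2 = a_1 n_1 + a_2 n_2 = x$. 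Using $\gcd(n_1,n_2)=1$, I would then show that there is a \emph{unique} factorization with second coordinate in $\{0,\dots,n_1-1\}$ and a unique one with first coordinate in $\{0,\dots,n_2-1\}$: if two factorizations share a reduced coordinate, subtracting the defining equations and invoking the coprimality of $n_1,n_2$ forces them to coincide. I regard this normal-form uniqueness as the true crux of the argument, even though it is short.

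Next I would analyze how length varies across $\mathsf{Z}(x)$. The length of $(a_1 + t n_2, a_2 - t n_1)$ equals $a_1 + a_2 + t(n_2 - n_1)$, which is \emph{strictly increasing} in $t$ because $n_2 > n_1$. Hence the maximum-length factorization is the one with $t$ as large as the constraint $a_2 - t n_1 \ge 0$ allows, i.e.\ the unique representative whose second coordinate lies in $\{0,\dots,n_1-1\}$; symmetrically, the minimum-length factorization is the unique representative whose first coordinate lies in $\{0,\dots,n_2-1\}$. This pins $L(x)$ and $\ell(x)$ to the two canonical normal forms identified above.

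Finally I would prove the two identities directly from this characterization. For $L$, let $(b_1,b_2)$ be the normal form of $x$ with $b_2 \in \{0,\dots,n_1-1\}$, so $L(x) = b_1 + b_2$; then $(b_1+1, b_2)$ is a factorization of $x + n_1$ whose second coordinate is still $b_2 \in \{0,\dots,n_1-1\}$, so by uniqueness it is the maximum-length factorization of $x+n_1$, giving $L(x+n_1) = (b_1+1)+b_2 = L(x)+1$. For $\ell$, let $(c_1,c_2)$ be the normal form of $x$ with $c_1 \in \{0,\dots,n_2-1\}$, so $\ell(x) = c_1 + c_2$; then $(c_1, c_2+1)$ factorizes $x+n_2$ with first coordinate still $c_1 \in \{0,\dots,n_2-1\}$, hence is the minimum-length factorization of $x+n_2$, yielding $\ell(x+n_2) = c_1 + (c_2+1) = \ell(x)+1$. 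Since $M$ is closed under addition, $x+n_1, x+n_2 \in M$ whenever $x \in M$, so no separate existence check is needed, and the proof is complete.
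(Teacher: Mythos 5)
Your proof is correct, but there is no paper proof to compare it against: the paper states this proposition with only a citation to \cite[p.~334]{chapman2018factoring}, and the machinery it develops itself (Lemma~\ref{Lem:Non1}, Theorem~\ref{Thm:LongestReduction}) is aimed at the embedding-dimension-three case. So your argument is a self-contained replacement for that citation, and it is the standard one: describe $\mathsf{Z}(x)$ as a single orbit under the trade $(a_1,a_2)\mapsto(a_1+n_2,a_2-n_1)$, note that length $a_1+a_2+t(n_2-n_1)$ is strictly increasing in $t$ because $n_2>n_1$, conclude that $L(x)$ and $\ell(x)$ are attained exactly at the two normal forms (second coordinate reduced mod $n_1$, respectively first coordinate reduced mod $n_2$), and observe that adding $n_1$ (resp.\ $n_2$) to $x$ preserves the relevant normal form, so the extremal length shifts by exactly $1$. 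Each of these steps checks out. One point to tighten in the write-up: you justify the displayed description of $\mathsf{Z}(x)$ by ``the identity,'' but that identity only yields the containment $\supseteq$; the containment your monotonicity argument actually uses --- that \emph{every} factorization of $x$ lies on the orbit of $(a_1,a_2)$ --- is the direction requiring $\gcd(n_1,n_2)=1$, by the same subtract-and-divide argument you invoke for normal-form uniqueness, so you should say so explicitly. (Coprimality is indeed available: numerical monoids are cofinite in $\N$ by definition, which forces $\gcd(n_1,n_2)=1$.) With that sentence added, the proof is complete.
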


For this reason, we restrict ourselves to numerical monoids of embedding dimension three.

\section{Betti Elements}\label{Sec:Betti}
\indent\indent An important invariant of numerical monoids is the set of Betti elements. We use this set to better describe numerical monoids of embedding dimension three. 

\begin{definition}
	Let $M$ be a numerical monoid. Let $x \in M$ be an element and we define $\nabla_x$ as a graph with vertices $\mathsf{Z}(x)$ and two distinct vertices $v, v' \in \mathsf{Z}(x)$ are connected by an edge if and only if $v \cdot v' \neq 0$. We call $x$ a \textbf{Betti element} if $\nabla_x$ is not connected, and we will use $\textup{Betti}(M)$ to denote the set of Betti elements of $M$. 
\end{definition}

For a numerical monoid $\langle n_1, n_2, n_3 \rangle$ of embedding dimension three with $n_1 < n_2 < n_3$, we can define for each $i = 1,2,3$ the numbers
\[
	c_i = \min \{k \in \N \setminus \{0\} \mid \text{there exists $(a_1, a_2, a_3) \in \mathsf{Z}(kn_i)$ such that $a_i = 0$} \}.
\]

Using these numbers $c_1, c_2, c_3$, we can describe the set of Betti elements of a numerical monoid of embedding dimension three. 

\begin{proposition}\cite[p. 582]{garcia2018delta}\label{Prop:FormOfBettiElts}
	For a numerical monoid $M = \langle n_1, n_2, n_3 \rangle$ of embedding dimension three, we have
	\[
		\textup{Betti}(M) = \{c_1n_1, c_2n_2, c_3n_3\}. 
	\]
\end{proposition}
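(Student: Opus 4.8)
The plan is to prove the two inclusions of the set equality separately: first that each $c_in_i$ is a Betti element, and then that every Betti element has this form. Throughout I write $e_i$ for the $i$-th standard basis vector of $\N^3$, so that $c_ie_i$ is the ``pure'' factorization of $c_in_i$ using only $n_i$, and I use repeatedly that two distinct factorizations are adjacent in $\nabla_x$ exactly when their supports meet (the dot product of nonnegative vectors is nonzero iff they share a positive coordinate).

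For the inclusion $\{c_1n_1,c_2n_2,c_3n_3\}\subseteq \textup{Betti}(M)$, fix $i$ and partition $\mathsf{Z}(c_in_i)$ into $A=\{v:v_i>0\}$ and $B=\{v:v_i=0\}$. Both are nonempty: $c_ie_i\in A$, while $B$ contains a factorization avoiding $n_i$ by the very definition of $c_i$. I claim there is no edge between $A$ and $B$, which shows $\nabla_{c_in_i}$ is disconnected. Indeed, suppose $v\in A$ and $w\in B$ were adjacent. Since $w_i=0$, the shared coordinate must be some $j\ne i$, so $v_j>0$ as well. Deleting the $n_i$-part of $v$ gives $\sum_{l\ne i}v_ln_l=(c_i-v_i)n_i$, whose left side is positive because $v_j>0$; hence $0<c_i-v_i<c_i$, and $(v_l)_{l\ne i}$ is a factorization of the smaller positive multiple $(c_i-v_i)n_i$ avoiding $n_i$, contradicting the minimality of $c_i$.

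For the reverse inclusion, let $x$ be a Betti element, so $\nabla_x$ has vertices $v,w$ in different components; being nonadjacent, they have disjoint supports. As $\{1,2,3\}$ cannot contain two disjoint nonempty subsets each of size at least two, one support is a singleton, say $\mathrm{supp}(v)=\{i\}$. Then $v=ke_i$ and $x=kn_i$, while $w$ avoids $n_i$, forcing $k\ge c_i$ by minimality. It remains to rule out $k>c_i$. The key structural observation is that every factorization with positive $i$-th coordinate is adjacent to $ke_i$, so all of $A=\{v:v_i>0\}$ lies in a single component; thus $x$ being Betti requires some $w$ with $w_i=0$ to be separated from $A$. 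Choosing any factorization $\delta$ of $c_in_i$ avoiding $n_i$ and forming $u=\delta+(k-c_i)e_i\in A$, the separation forces $\mathrm{supp}(w)$ and $\mathrm{supp}(\delta)$ to be disjoint. Generically this gives a contradiction at once: if $w\ge\delta$ coordinatewise, then $w'=w-\delta+c_ie_i$ lies in $A$ and satisfies $w\cdot w'>0$, so $w$ is joined to $A$.

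The main obstacle is the degenerate case that survives this argument. Since $\mathrm{supp}(w)$ and $\mathrm{supp}(\delta)$ are disjoint nonempty subsets of the two-element set $\{1,2,3\}\setminus\{i\}$, they must be complementary singletons: $c_in_i=d\,n_{j_1}$ is a pure multiple of one other generator, while $w=w_{j_2}e_{j_2}$ is a pure multiple of the remaining one, so $x=kn_i=w_{j_2}n_{j_2}$ is a common multiple of $n_i$ and $n_{j_2}$ with $k>c_i$. Here I expect the crux to lie: one must show that such an $x$, being a genuine common multiple strictly larger than $c_in_i=\mathrm{lcm}(n_i,n_{j_1})$, necessarily admits a \emph{mixed} factorization using $n_{j_2}$ together with one of $n_i,n_{j_1}$, which would bridge $w$ to $A$ and contradict the separation. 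I would establish this by examining $x-n_{j_2}$ and combining the two-generator relation $c_in_i=d\,n_{j_1}$ with the strict inequality $k>c_i$ to manufacture the bridging factorization, thereby forcing $k=c_i$ and completing the proof.
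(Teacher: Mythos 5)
Your first inclusion ($\{c_1n_1,c_2n_2,c_3n_3\}\subseteq\Betti(M)$) is correct, and the reduction in the converse direction — disjoint supports, one of them a singleton $\{i\}$, so $x=kn_i$ with $k\ge c_i$, and in the surviving case $c_in_i=d\,n_{j_1}$ and $x=w_{j_2}n_{j_2}$ — is also sound. (For the record, the paper gives no proof of this proposition; it is quoted from the literature, so there is nothing to compare against.) The genuine gap is that the goal you then set yourself, to ``rule out $k>c_i$,'' is not attainable: your proposed crux is false, and the proof cannot be completed along those lines. A Betti element of the form $kn_i$ may well have $k>c_i$; the proposition survives because such an $x$ is then $c_jn_j$ for a \emph{different} $j$. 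Concretely, take $M=\langle 4,6,9\rangle$, which has embedding dimension three. Then
\[
\mathsf{Z}(18)=\{(0,3,0),\,(3,1,0),\,(0,0,2)\},
\]
so $18$ is a Betti element, with $(0,0,2)$ isolated. Taking $v=(0,3,0)$ and $w=(0,0,2)$ in your argument gives $i=2$ and $k=3>c_2=2$ (indeed $2\cdot 6=3\cdot 4$), and you land exactly in your degenerate case with $j_1=1$, $j_2=3$. The bridging factorization you hope to manufacture does not exist: $18-9=9\notin\langle 4,6\rangle$, so $(0,0,2)$ is the only factorization of $18$ involving $9$, and $18$ really is a Betti element — because $18=c_3n_3$, not because $k=c_2$.

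The fix stays entirely within your framework but exploits the symmetry of the degenerate case, in which $w=w_{j_2}e_{j_2}$ also has singleton support: instead of forcing $k=c_i$, show that $k=c_i$ \emph{or} $w_{j_2}=c_{j_2}$, either of which puts $x$ in $\{c_1n_1,c_2n_2,c_3n_3\}$. Suppose both fail. Your argument applied to $v=ke_i$ gives $\delta=de_{j_1}$ with $c_in_i=dn_{j_1}$ and $u=de_{j_1}+(k-c_i)e_i$ in the component of $A$ (here $u_i=k-c_i>0$). Since $ke_i$ is a factorization of $x=w_{j_2}n_{j_2}$ avoiding $n_{j_2}$, minimality gives $w_{j_2}\ge c_{j_2}$; assuming $w_{j_2}>c_{j_2}$, pick $\epsilon\in\mathsf{Z}(c_{j_2}n_{j_2})$ with $\epsilon_{j_2}=0$ and set $u'=\epsilon+(w_{j_2}-c_{j_2})e_{j_2}\in\mathsf{Z}(x)$. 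Then $u'_{j_2}>0$, so $u'$ is adjacent to $w$ and hence lies in $w$'s component, separated from the component containing $A$. Nonadjacency of $u'$ to $ke_i$ forces $\epsilon_i=0$, so $\mathrm{supp}(\epsilon)=\{j_1\}$ and $\epsilon=d'e_{j_1}$ with $d'>0$; but then $u\cdot u'\ge dd'>0$, so $u'$ is adjacent to $u$, which sits in the component of $A$ — a contradiction. In your example above this dichotomy is visible: $k>c_2$ there, but $w_{j_2}=2=c_3$, which is exactly why $18$ is a Betti element.
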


The elements in the set above are not necessarily distinct. The number of Betti elements of a numerical monoid of embedding dimension three can be determined by the form of the generators. 

\begin{theorem}\cite[p. 583]{garcia2018delta}\label{Thm:2BettiForm}
	Let $M = \langle n_1, n_2, n_3 \rangle$ be a numerical monoid of embedding dimension three. Then $\abs{\textup{Betti}(M)} \leq 2$ if and only if 
	\[
		\{n_1, n_2, n_3\} = \{am_1, am_2, bm_1 + cm_2 \}
	\]
	for some $a,b,c, m_1, m_2 \in \N$ such that $ 1 < m_1 < m_2$, $\gcd(m_1, m_2) = 1$, $a\geq 2$, $ b+c \geq 2$, and $\gcd(a, bm_1+cm_2) = 1$. 
\end{theorem}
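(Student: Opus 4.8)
The plan is to prove both directions of the characterization by directly analyzing the structure of the set of factorizations $\mathsf{Z}(x)$ of elements of $M = \langle n_1, n_2, n_3 \rangle$ and connecting the count $\abs{\textup{Betti}(M)}$ to the numbers $c_1, c_2, c_3$ introduced above. By Proposition~\ref{Prop:FormOfBettiElts}, $\textup{Betti}(M) = \{c_1 n_1, c_2 n_2, c_3 n_3\}$, so the condition $\abs{\textup{Betti}(M)} \le 2$ is equivalent to having at least two of these products coincide. The key idea is that each relation $c_i n_i = c_j n_j$ (for suitable $i \neq j$) records a minimal syzygy among the generators, and such a coincidence forces the generators to share a multiplicative structure. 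I would therefore recast the problem as: a Betti coincidence $c_i n_i = c_j n_j$ holds if and only if two of the generators are proportional through a common factor, which is exactly what the right-hand-side normal form $\{am_1, am_2, bm_1 + cm_2\}$ encodes.

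For the forward direction ($\Rightarrow$), I would assume $\abs{\textup{Betti}(M)} \le 2$ and produce the claimed form. The first step is to observe that with three generators there are at most three distinct Betti elements, and having at most two means some identity $c_i n_i = c_j n_j$ holds. I would then argue that the associated minimal factorization relation expresses two of the generators, say $n_1$ and $n_2$, as multiples $am_1$ and $am_2$ of coprime integers $m_1 < m_2$ scaled by a common factor $a = \gcd(n_1, n_2)$ (or the appropriate pair), while the third generator, not being a multiple of $a$, must lie in the subsemigroup structure generated by $m_1, m_2$ up to this scaling, yielding $n_3 = bm_1 + cm_2$. The inequalities $a \ge 2$, $b + c \ge 2$, $1 < m_1 < m_2$, $\gcd(m_1,m_2)=1$, and $\gcd(a, bm_1+cm_2)=1$ then follow from minimality of the generating set (no generator is redundant) and from $\gcd(n_1,n_2,n_3)=1$. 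The delicate bookkeeping here is verifying that the coprimality and strict-inequality constraints are exactly the ones forced, and that I have correctly identified which pair of generators shares the common factor.

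For the converse ($\Leftarrow$), I would start from generators of the form $\{am_1, am_2, bm_1 + cm_2\}$ satisfying the stated constraints and show directly that two of the three products $c_i n_i$ coincide, so that $\abs{\textup{Betti}(M)} \le 2$. Concretely, the relation $m_2 \cdot (am_1) = m_1 \cdot (am_2)$ gives a common multiple $am_1m_2$, and I would compute the relevant $c_i$ by checking that these are the \emph{minimal} positive multiples admitting a factorization omitting the corresponding generator; the coprimality $\gcd(m_1,m_2)=1$ guarantees minimality. Combined with Proposition~\ref{Prop:FormOfBettiElts}, this collapses the set $\{c_1n_1, c_2n_2, c_3n_3\}$ to at most two elements.

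The main obstacle I anticipate is the careful computation of the $c_i$ and the verification of minimality in both directions: it is easy to exhibit \emph{a} factorization of $am_1 m_2$ omitting a given generator, but proving it is the smallest such multiple (so that the $c_i$ are exactly as claimed and no spurious third distinct Betti element appears) requires using the coprimality hypotheses tightly and ruling out smaller relations. A secondary subtlety is correctly matching up the normal form to whichever pair of generators actually shares the Betti coincidence, since the labelling $n_1 < n_2 < n_3$ need not align with the roles of $am_1, am_2, bm_1+cm_2$; I would handle this by arguing up to the symmetry of which generator plays the role of the ``composite'' generator $bm_1 + cm_2$.
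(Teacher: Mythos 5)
First, a structural note: the paper itself does not prove this theorem; it is imported with a citation to \cite[p.~583]{garcia2018delta}, so there is no internal proof to compare against and your proposal must stand on its own. It does not: there is a genuine gap at the heart of the forward direction. From a coincidence $c_in_i = c_jn_j$ you can indeed extract the proportionality data --- setting $d = \gcd(n_i,n_j)$, $n_i = dm_i$, $n_j = dm_j$, the identity $c_im_i = c_jm_j$ together with $\gcd(m_i,m_j)=1$ and the bound $c_i \le m_j$ (valid since $m_jn_i = m_in_j$) forces $c_i = m_j$ and $c_j = m_i$. But the two statements that carry all the content of the theorem are then asserted with no mechanism: that $d \ge 2$, and that the remaining generator $n_k$ lies in $\langle m_i, m_j\rangle$. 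Your stated ``key idea'' --- that a Betti coincidence holds if and only if two generators are proportional through a common factor --- is false as a biconditional, and this matters: the monoid $\langle 6,7,10\rangle$ has $\gcd(6,10)=2$, yet $7 \notin \langle 3,5\rangle$, and one computes its Betti elements to be $20, 24, 28$, three distinct values. So sharing a common factor is not what a coincidence encodes; the membership $n_k \in \langle m_i,m_j\rangle$ is an additional condition, and deducing it (together with $d \ge 2$) from $\abs{\Betti(M)} \le 2$ is precisely the hard part of the implication, which your outline replaces with the phrase ``must lie in the subsemigroup structure generated by $m_1, m_2$.'' (A workable route would be: from $\gcd(d,n_k)=1$ and $c_kn_k = r_{ki}n_i + r_{kj}n_j$ conclude $d \mid c_k$, write $c_k = de$ so that $en_k \in \langle m_i,m_j\rangle$, and then invoke the Betti hypothesis a second time to force $e=1$ and to exclude $d=1$; none of this appears in the proposal.)

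Second, the converse direction is closer to complete but rests on an inaccurate claim: it is not true that coprimality guarantees $c_i = m_2$ for the generator $am_1$, i.e.\ that $am_1m_2$ is always the minimal multiple of $am_1$ admitting a factorization omitting it. Take $a=7$, $m_1=3$, $m_2=5$, $b=2$, $c=0$, so $M = \langle 6,21,35\rangle$ with $21 = am_1$, $35 = am_2$, $6 = bm_1$. The minimal multiple of $21$ expressible without $21$ is $2\cdot 21 = 42 = 7\cdot 6$, not $5 \cdot 21 = am_1m_2$. The conclusion $\abs{\Betti(M)}\le 2$ still holds, but only because $c_in_i$ collapses onto the \emph{other} Betti element $a(bm_1+cm_2) = 42$ rather than onto $am_1m_2$; so the correct statement is that each $c_in_i$ lands in $\{am_1m_2,\, a(bm_1+cm_2)\}$, which requires a case analysis, not the single minimality claim you propose. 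This part is fixable; the forward direction is where the essential idea is missing.
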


\begin{remark}
	In \cite[p. 583]{garcia2018delta}, the case of one Betti element and two Betti elements are separated. The case of two Betti elements is listed as above. In the case when $\abs{\Betti(M)} = 1$, we have $\{n_1, n_2, n_3\} = \{s_2s_3, s_1s_3, s_1s_2\}$ for some positive pariwise coprime integers $s_1, s_2, s_3$. This has the same form as above by setting $a = s_3, m_1 = s_2, m_2 = s_1, b=s_1, c = 0$. 
\end{remark}

In particular, if $M$ is a numerical monoid of embedding dimension three and the three minimal generators are pairwise coprime, then $\abs{\textup{Betti}(M)} = 3$. However, the converse is not true in general. 

Due to how $c_i$ is defined, there exist $r_{12}, r_{13}, r_{21}, r_{23}, r_{31}, r_{32} \in \N$ such that
\begin{align*}
	c_1n_1 = r_{12}n_2 + r_{13}n_3,\\
	c_2n_2 = r_{21}n_1 + r_{23}n_3,\\
	c_3n_3 = r_{31}n_1 + r_{32}n_2.
\end{align*}

In \cite[Lemma 1]{rosales2004numerical}, it was shown that each $r_{ij} > 0$ if the generators are pairwise coprime. Moreover, the following lemma establishes a relationship between these coefficients. 

\begin{lemma}\cite[Lemma 3]{rosales2004numerical}\label{Lemma:ColumnAdd}
	For every $\{i,j,k\} = \{1,2,3\}$,
	\[
		c_i = r_{ji} + r_{ki}
	\]
\end{lemma}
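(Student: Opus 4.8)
The plan is to recast the three identities as the single vector equation $v_1 + v_2 + v_3 = 0$, where
\[
	v_1 = (-c_1, r_{12}, r_{13}), \quad v_2 = (r_{21}, -c_2, r_{23}), \quad v_3 = (r_{31}, r_{32}, -c_3).
\]
Each defining relation says precisely that $v_i$ lies in the relation lattice $L = \{(a,b,c) \in \Z^3 \mid an_1 + bn_2 + cn_3 = 0\}$, and a direct computation shows that for $\{i,j,k\} = \{1,2,3\}$ the $i$-th coordinate of $v_1 + v_2 + v_3$ equals $r_{ji} + r_{ki} - c_i$. Thus it suffices to prove $v_1 + v_2 + v_3 = 0$, which I would deduce from two ingredients: the three inequalities $c_i \leq r_{ji} + r_{ki}$, making every coordinate of $v_1 + v_2 + v_3$ nonnegative, together with the observation that a nonnegative vector of $L$ must vanish.

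The nonnegativity observation is immediate: if $w = (w_1, w_2, w_3) \in L$ has $w_i \geq 0$ for all $i$, then $w_1 n_1 + w_2 n_2 + w_3 n_3 = 0$ with each $n_i > 0$ forces $w = 0$. Since $v_1 + v_2 + v_3 \in L$, once its coordinates are known to be nonnegative, it must be the zero vector, which is exactly the claimed identities.

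To establish the inequality $c_1 \leq r_{21} + r_{31}$ (the remaining two being symmetric), I would add the two relations not involving $n_1$, namely $c_2 n_2 = r_{21}n_1 + r_{23} n_3$ and $c_3 n_3 = r_{31}n_1 + r_{32}n_2$, and rearrange to
\[
	(c_2 - r_{32}) n_2 + (c_3 - r_{23}) n_3 = (r_{21} + r_{31}) n_1.
\]
Provided the coefficients $c_2 - r_{32}$ and $c_3 - r_{23}$ are nonnegative, the left-hand side is a representation of the positive multiple $(r_{21}+r_{31})n_1$ that avoids the generator $n_1$, so the minimality of $c_1$ yields $c_1 \leq r_{21} + r_{31}$.

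The main obstacle is therefore the auxiliary bound $r_{il} \leq c_l$ for distinct $i,l$, which is what guarantees the coefficients above are nonnegative. I would prove this by contradiction: assuming $r_{12} > c_2$, substitute $c_2 n_2 = r_{21}n_1 + r_{23}n_3$ into $c_1 n_1 = r_{12}n_2 + r_{13}n_3$ to obtain
\[
	(c_1 - r_{21}) n_1 = (r_{12} - c_2) n_2 + (r_{13} + r_{23}) n_3,
\]
which, using the positivity of the $r_{ij}$ guaranteed in the pairwise coprime case by \cite[Lemma 1]{rosales2004numerical}, exhibits a positive multiple $(c_1 - r_{21})n_1$ with $0 < c_1 - r_{21} < c_1$ representable without $n_1$, contradicting the minimality of $c_1$. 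Carrying out this substitution argument carefully — tracking that the relevant multipliers remain strictly between $0$ and $c_1$, and ruling out the degenerate possibilities where some $r_{ij}$ vanishes — is the delicate part; once the bounds $r_{il} \leq c_l$ are in hand, the remaining linear-algebra bookkeeping is routine.
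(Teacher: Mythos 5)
Your proof is correct, but note that there is nothing in the paper itself to compare it against: the paper states this lemma purely as a citation to \cite[Lemma 3]{rosales2004numerical} and supplies no proof. Your route --- first the bounds $r_{il} \le c_l$ by substitution and minimality of $c_i$, then the one-sided inequalities $c_i \le r_{ji} + r_{ki}$ by adding the two defining relations that avoid $n_i$, and finally equality in all three at once because the difference vector lies in the relation lattice and has nonnegative coordinates --- matches the structure of the original argument as far as the paper describes it: the paper explicitly remarks that the proof of \cite[Lemma 3]{rosales2004numerical} relies on the positivity of the $r_{ij}$ from \cite[Lemma 1]{rosales2004numerical}, which is exactly the dependence you isolate. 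Two remarks. First, you are right to treat that positivity as the crux rather than a technicality: the statement is genuinely false without it. For $M = \langle 4, 6, 9 \rangle$ one computes $c_1 = 3$, $c_2 = 2$, $c_3 = 2$ via $12 = 2\cdot 6$, $12 = 3\cdot 4$, $18 = 3\cdot 6$, so $r_{13} = r_{23} = 0$ and $c_3 = 2 > 0 = r_{13} + r_{23}$; in your scheme, adding the two relations avoiding $n_3$ produces only the vacuous identity $0 = 0$, and minimality cannot be invoked. This also clarifies why the paper bothers to re-prove positivity under the weaker hypothesis $\abs{\Betti(M)} = 3$: with that lemma as input, your argument goes through verbatim in the generality the paper actually uses, not just in the pairwise-coprime setting of \cite[Lemma 1]{rosales2004numerical}. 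Second, a small point you leave implicit: to apply the minimality of $c_i$ in the step $c_i \le r_{ji} + r_{ki}$ you need $r_{ji} + r_{ki} > 0$, not merely the nonnegativity of $c_j - r_{kj}$ and $c_k - r_{jk}$; this again follows from positivity of the $r_{ij}$, so your proof is complete, but the sentence should be said.
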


If the numerical monoid $\langle n_1, n_2, n_3 \rangle$ has $\gcd(n_i,n_j) = 1$ for $i \neq j$, then the coefficients $r_{ij}$ are unique. The numerical monoid can also be recovered from these invariants.

\begin{theorem}\cite[Theorem 9]{rosales2004numerical}\label{Thm:CoprimeBijection}
	Let $X$ be the set consisting of all numerical monoids of embedding dimension three having the form $\langle n_1, n_2, n_3 \rangle$ such that $\gcd(n_i, n_j) = 1$ if $i \neq j$ and $n_1 < n_2 < n_3$. Let $Y$ be the set of $3 \times 3$ matrices of the form 
	\[
		\begin{pmatrix}
		0 & a_{12} & a_{13} \\
		a_{21} & 0 & a_{23}\\
		a_{31} & a_{32} & 0
		\end{pmatrix},
	\]
	where $a_{12}, a_{13}, a_{21}, a_{23}, a_{31}, a_{32} \in \N \setminus \{0\}$, and the numbers $A_1 = a_{12}a_{13} + a_{12}a_{23} + a_{13}a_{32}, A_2 = a_{13}a_{21} + a_{21}a_{23} + a_{23}a_{31}$, and $A_3 = a_{12}a_{31} + a_{21}a_{32} + a_{31}a_{32}$ have the properties that $A_1 < A_2 < A_3$ and $\gcd(A_i, A_j) = 1$ if $i \neq j$. There exists a bijection between $X$ and $Y$

given by the assignments 
\[
	\langle n_1, n_2, n_3 \rangle \mapsto \begin{pmatrix}
	0 & r_{12} & r_{13} \\
	r_{21} & 0 & r_{23}\\
	r_{31} & r_{32} & 0
	\end{pmatrix} \text{ and } \begin{pmatrix}
	0 & a_{12} & a_{13} \\
	a_{21} & 0 & a_{23}\\
	a_{31} & a_{32} & 0
	\end{pmatrix} \mapsto \langle A_1, A_2, A_3 \rangle . 
\]
\end{theorem}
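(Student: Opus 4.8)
The plan is to prove that the two displayed assignments are mutually inverse, with the arithmetic of both directions controlled by a single linear-algebraic identity. To an array $(a_{ij})$ with zero diagonal I associate the weighted Laplacian
\[
R = \begin{pmatrix}
a_{21}+a_{31} & -a_{12} & -a_{13} \\
-a_{21} & a_{12}+a_{32} & -a_{23}\\
-a_{31} & -a_{32} & a_{13}+a_{23}
\end{pmatrix},
\]
whose $(i,i)$ principal $2\times 2$ minor is exactly $A_i$ (e.g.\ the $(1,1)$ minor is $(a_{12}+a_{32})(a_{13}+a_{23})-a_{23}a_{32}=A_1$). The columns of $R$ sum to zero, so $\operatorname{rank}R\le 2$, and since each $A_i>0$ the rank is exactly $2$. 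The key identity is $R\,(A_1,A_2,A_3)^{\top}=0$, i.e.
\[
(a_{ji}+a_{ki})\,A_i = a_{ij}A_j + a_{ik}A_k \qquad \text{for } \{i,j,k\}=\{1,2,3\}.
\]
I would prove this by expanding the first-row instance, where both sides collapse to the same six monomials, and then invoke the cyclic symmetry $1\to2\to3\to1$ of the $a_{ij}$, which permutes $A_1\to A_2\to A_3\to A_1$ and hence yields the remaining two rows. Consequently the rational kernel of $R$ is the single line $\R\,(A_1,A_2,A_3)$.

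For the forward map, take $M=\langle n_1,n_2,n_3\rangle\in X$. The coefficients $r_{ij}$ are unique in the pairwise-coprime case and positive by the cited result of Rosales, and Lemma~\ref{Lemma:ColumnAdd} together with the relations $c_in_i=r_{ij}n_j+r_{ik}n_k$ shows $R\,(n_1,n_2,n_3)^{\top}=0$ for $R$ built from the $r_{ij}$. Thus $(n_1,n_2,n_3)$ and $(A_1,A_2,A_3)$ span the same kernel line, so $A_i=q\,n_i$ for a positive rational $q$; a gcd argument using $\gcd(n_1,n_2,n_3)=1$ forces $q\in\Z_{>0}$. Upgrading this to $q=1$ (that is, $A_i=n_i$) is the delicate point: I would identify $q=\gcd(A_1,A_2,A_3)$ with the gcd of the $2\times 2$ minors of $R$ and show this gcd equals $1$ by invoking pairwise coprimality of the generators, equivalently the minimality built into the definition of the $c_i$. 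Granting $A_i=n_i$, the output matrix inherits $A_1<A_2<A_3$ and pairwise coprimality, so lands in $Y$, and $\Psi(\Phi(M))=\langle A_1,A_2,A_3\rangle=M$.

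For the reverse map, start from $(a_{ij})\in Y$. Each $A_i\ge 3>1$, the $A_i$ are pairwise coprime and ordered, so I must check that $M:=\langle A_1,A_2,A_3\rangle$ has embedding dimension exactly three before concluding $M\in X$. The identity above exhibits, for each $i$, a relation $(a_{ji}+a_{ki})A_i=a_{ij}A_j+a_{ik}A_k$ with vanishing coefficient on $A_i$, so the Betti multiplier $c_i$ of $M$ divides $a_{ji}+a_{ki}$; the remaining task is to prove equality and that the accompanying factorization is $(r_{ij},r_{ik})=(a_{ij},a_{ik})$, after which uniqueness of the $r_{ij}$ gives $\Phi(\Psi(a))=(a_{ij})$.

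I expect the main obstacle to be exactly this minimality step, which is shared by both directions: the linear algebra of $R$ pins down the $A_i$ and the relations only up to a global scalar and up to common multiples, so the argument must use the extremal definition of the $c_i$ and the pairwise coprimality (forcing the gcd of the minors to be $1$) to exclude a proper multiple. Concretely, I would assume a shorter relation $c_iA_i=sA_j+tA_k$ with $c_i<a_{ji}+a_{ki}$, subtract an integer multiple of the Paragraph-one identity, and extract a contradiction from the positivity of the $a_{ij}$ and the coprimality of $A_j$ and $A_k$; this bookkeeping is the part I anticipate being the most technical.
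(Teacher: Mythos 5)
Your linear-algebra skeleton is sound as far as it goes: the Laplacian identity $(a_{ji}+a_{ki})A_i=a_{ij}A_j+a_{ik}A_k$ is correct (the first row expands to the same six monomials on both sides, and cyclicity gives the rest), and in the forward direction it does yield $(A_1,A_2,A_3)=q\,(n_1,n_2,n_3)$ with $q\in\Z_{>0}$. But the two steps you defer are not technical bookkeeping --- they are the entire theorem --- and your sketches for them do not go through as stated. For the forward map you need $q=1$. Your proposed mechanism, ``pairwise coprimality of the generators, equivalently the minimality built into the definition of the $c_i$,'' conflates two different things: these are not equivalent, and pairwise coprimality cannot by itself give $\gcd(A_1,A_2,A_3)=1$, because $\gcd(A_i,A_j)=q\gcd(n_i,n_j)=q$, so deducing $q=1$ from coprimality of the $A_i$ is circular (that coprimality is part of what you must prove for the image to land in $Y$). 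What $q=1$ actually asserts is that two rows of your matrix $R$ already generate the full relation lattice $\{x\in\Z^3 \mid x_1n_1+x_2n_2+x_3n_3=0\}$, i.e., that the sublattice spanned by the minimal Betti relations is saturated. That is a genuine structure theorem for embedding-dimension-three monoids (Herzog, Rosales), proved by a careful induction on the minimality of the $c_i$; it is exactly the content of \cite[Lemmas 2--5]{rosales2004numerical}, which this paper records (in its extended setting) as Lemma \ref{Lem:RecoverGeneratorsFromBetti3}.

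The reverse map has the parallel gap: given a matrix in $Y$ you must show that $\langle A_1,A_2,A_3\rangle$ has embedding dimension three, that $c_i=a_{ji}+a_{ki}$, and that the factorization witnessing $c_iA_i\in\langle A_j,A_k\rangle$ is exactly $(a_{ij},a_{ik})$; only then does uniqueness of the $r_{ij}$ give $\Phi(\Psi(A))=A$. (A small but symptomatic error: minimality gives only $c_i\leq a_{ji}+a_{ki}$, not that $c_i$ \emph{divides} $a_{ji}+a_{ki}$; the set of $k$ with $kA_i\in\langle A_j,A_k\rangle$ is not closed under taking divisors or generated by division.) Your plan --- subtract an integer multiple of the identity from a hypothetical shorter relation and contradict positivity --- runs into the standard difficulty that the difference of two relations has coefficients of mixed sign, so positivity of the $a_{ij}$ together with coprimality of $A_j$ and $A_k$ does not immediately finish; this is precisely where \cite[Theorem 8]{rosales2004numerical} does its work. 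For perspective: the paper itself never proves this statement. It quotes Rosales's Theorem 9, and when it extends the bijection in Theorem \ref{Thm:Bijection} it again outsources injectivity to \cite[Theorem 8]{rosales2004numerical} and surjectivity to Lemma \ref{Lem:RecoverGeneratorsFromBetti3}. So your proposal is best read as a clean reformulation that isolates, but does not close, the two lemmas carrying all of the arithmetic content.
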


This bijection can be extended to all numerical monoids of embedding dimension three with three Betti elements. The proofs of \cite[Lemmas 2, 3, 4, and 5]{rosales2004numerical} do not rely on the fact that the generators are pairwise coprime. However, they do rely on the result of \cite[Lemma 1]{rosales2004numerical}, which was proven using the fact that the generators are pairwise coprime. We provide here the result of \cite[Lemma 1]{rosales2004numerical} under the weaker assumption that there are three Betti elements. 

\begin{lemma}
	Let $M = \langle n_1, n_2, n_3 \rangle$ be a numerical monoid of embedding dimension three such that $\abs{\Betti(M)} = 3$. Then $r_{12}, r_{13}, r_{21}, r_{23}, r_{31}, r_{32}$ are positive integers.
\end{lemma}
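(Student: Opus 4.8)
The plan is to argue by contradiction: assuming some coefficient vanishes, I will show that two of the three Betti elements $c_1n_1, c_2n_2, c_3n_3$ coincide, contradicting $\abs{\Betti(M)} = 3$. First I would record that in each defining equation $c_in_i = r_{ij}n_j + r_{ik}n_k$ the two coefficients cannot both be zero, since $c_in_i > 0$; so it suffices to rule out the case where exactly one of them is zero. Since any single vanishing coefficient occurs as the coefficient of one generator in exactly one of the three defining equations, I may name the indices so that $r_{ij} = 0$ while $r_{ik} > 0$ for some $\{i,j,k\} = \{1,2,3\}$, so that the defining equation collapses to
\[
	c_in_i = r_{ik}n_k.
\]

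The key observation is that this forces the Betti element $c_in_i$ to be a multiple of $n_k$ that already admits a factorization avoiding $n_k$, namely the factorization using $c_i$ copies of $n_i$ (and none of $n_j$ or $n_k$). Since $c_k$ is by definition the least positive multiplier for which $c_kn_k$ has a factorization with zero $n_k$-component, and $r_{ik}n_k = c_in_i$ is such a multiple of $n_k$ with $r_{ik} > 0$, I obtain $c_k \le r_{ik}$. On the other hand, Lemma \ref{Lemma:ColumnAdd} gives $c_k = r_{ik} + r_{jk}$, and since $r_{jk} \ge 0$ this yields $c_k \ge r_{ik}$. Combining the two inequalities forces $c_k = r_{ik}$ (and incidentally $r_{jk} = 0$).

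Finally, substituting $c_k = r_{ik}$ into $c_in_i = r_{ik}n_k$ gives $c_in_i = c_kn_k$, so two of the elements listed in $\Betti(M) = \{c_1n_1, c_2n_2, c_3n_3\}$ (Proposition \ref{Prop:FormOfBettiElts}) represent the same element, whence $\abs{\Betti(M)} \le 2$, contradicting the hypothesis. Therefore no coefficient can vanish, and all of $r_{12}, r_{13}, r_{21}, r_{23}, r_{31}, r_{32}$ are positive. I expect the only delicate point to be the inequality $c_k \le r_{ik}$: it rests on reading the definition of $c_k$ correctly, namely that exhibiting any positive multiple $r_{ik}n_k$ of $n_k$ together with an explicit factorization of it omitting $n_k$ immediately bounds the minimal such multiplier. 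The rest is bookkeeping with Lemma \ref{Lemma:ColumnAdd}.
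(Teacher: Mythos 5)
Your argument has a genuine circularity problem: the step $c_k \ge r_{ik}$ rests entirely on Lemma \ref{Lemma:ColumnAdd}, and that lemma is not available under the hypotheses of the statement you are proving. Lemma \ref{Lemma:ColumnAdd} is quoted from Rosales's paper (his Lemma 3), where it is proved in the setting of pairwise coprime generators; as the paper explains immediately before the present lemma, Rosales's proof of that identity relies on his Lemma 1, namely the positivity of all the $r_{ij}$, which is exactly the statement you are trying to prove. The entire purpose of this lemma is to establish that positivity under the weaker hypothesis $\abs{\Betti(M)} = 3$, precisely so that Lemma \ref{Lemma:ColumnAdd} and the rest of Rosales's machinery can \emph{afterwards} be applied in this generality (this is how Lemma \ref{Lem:RecoverGeneratorsFromBetti3} and Theorem \ref{Thm:Bijection} are then obtained). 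You also cannot retreat to the coprime case, since three Betti elements do not force pairwise coprime generators (the paper notes this converse fails). Nor is the identity hypothesis-free: for $M = \langle 4, 6, 9 \rangle$ one has $c_2 = 2$, while $r_{12} = 2$ and $r_{32} \in \{1,3\}$, so $c_2 \neq r_{12} + r_{32}$ for any admissible choice of coefficients. So the step you flagged as the delicate one, $c_k \le r_{ik}$, is in fact fine and coincides with the paper's opening move; the unjustified step is the reverse inequality.

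The paper's proof avoids Lemma \ref{Lemma:ColumnAdd} altogether. From $c_in_i = r_{ik}n_k$ it obtains $r_{ik} \ge c_k$ just as you do, then uses $\abs{\Betti(M)} = 3$ to rule out equality (equality would give $c_in_i = c_kn_k$, merging two Betti elements), so $r_{ik} > c_k$. Subtracting $c_kn_k = r_{ki}n_i + r_{kj}n_j$ from $c_in_i = r_{ik}n_k$ and invoking the minimality of $c_i$ forces $r_{ki} = 0$; iterating this collapse yields three equations $c_in_i = r_{ik}n_k$, $c_kn_k = r_{kj}n_j$, $c_jn_j = r_{ji}n_i$ with each $r$ strictly exceeding the corresponding $c$, and multiplying them together gives the contradiction. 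If you want to salvage your shorter route, you would first need an independent proof of $c_k = r_{ik} + r_{jk}$ assuming only $\abs{\Betti(M)} = 3$, which is not easier than the lemma itself.
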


\begin{proof}
	We suppose for a contradiction that $r_{ij} = 0$ for some $i, j$. Then for $\{i,j,k\} = \{1,2,3\}$, we have 
	\[
		c_i n_i = r_{ik} n_k. 
	\]
	By the minimality of $c_k$, we obtain $r_{ik} \geq c_k$. Since $\abs{\Betti(M)} = 3$, we cannot have $c_i n_i = c_k n_k$. Thus, $r_{ik} > c_k$. Subtracting $c_k n_k = r_{ki} n_i + r_{kj} n_j$ from $c_i n_i = r_{ik} n_k$ and rearranging, we get that
	\[
		(c_i - r_{ki}) n_i = (r_{ik} - c_k) n_k + r_{kj} n_j. 
	\]
	Since $r_{ik} - c_k > 0$, we have that $c_i - r_{ki} \geq c_i$ by the minimality of $c_i$, implying that $r_{ki} = 0$. Repeating this argument gives us
	\begin{align*}
		c_i n_i & = r_{ik} n_k,\\
		c_k n_k & = r_{kj} n_j,\\
		c_j n_j & = r_{ji} n_i.
	\end{align*}
	However, the product of all three will yield $c_ic_kc_j = r_{ik}r_{kj}r_{ji}$, but this is impossible as $r_{ik} > c_i$ and similarly $r_{kj} > c_k$ and $r_{ji} > c_j$. Thus, $r_{ij} > 0$. 
\end{proof}

\begin{lemma}\label{Lem:RecoverGeneratorsFromBetti3}
	Let $M = \langle n_1, n_2, n_3 \rangle$ be a numerical monoid of embedding dimension three such that $\abs{\Betti(M)} = 3$. Then
	\begin{align*}
		n_1 & = r_{12}r_{13} + r_{12}r_{23}+r_{13}r_{32}\\
		n_2 & = r_{13}r_{21} + r_{21}r_{23} + r_{23}r_{31}\\
		n_3 & = r_{12}r_{31} + r_{21}r_{32} + r_{31}r_{32}. 
	\end{align*}
\end{lemma}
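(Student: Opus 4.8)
The plan is to recognize the three defining relations as vectors in the relation lattice of the monoid and to recover $(n_1,n_2,n_3)$ as their cross product. Write $\mathbf{n} = (n_1,n_2,n_3)$ and set
\[
	\rho_1 = (c_1,-r_{12},-r_{13}), \quad \rho_2=(-r_{21},c_2,-r_{23}),\quad \rho_3=(-r_{31},-r_{32},c_3),
\]
so that each relation $c_in_i = r_{ij}n_j + r_{ik}n_k$ says precisely that $\rho_i \cdot \mathbf{n} = 0$; that is, $\rho_1,\rho_2,\rho_3$ all lie in the relation lattice $\Lambda = \{v\in\Z^3 : v\cdot\mathbf{n} = 0\}$, which has rank $2$ because $\gcd(n_1,n_2,n_3)=1$. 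Throughout I would use Lemma \ref{Lemma:ColumnAdd} to substitute $c_1 = r_{21}+r_{31}$, $c_2 = r_{12}+r_{32}$, and $c_3 = r_{13}+r_{23}$.

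First I would compute the cross product $\rho_1\times\rho_2$ and simplify using Lemma \ref{Lemma:ColumnAdd}. A direct expansion gives
\[
	\rho_1\times\rho_2 = \bigl(r_{12}r_{23}+r_{13}c_2,\; r_{13}r_{21}+c_1 r_{23},\; c_1c_2 - r_{12}r_{21}\bigr),
\]
and after replacing $c_1,c_2$ this becomes exactly the triple $(N_1,N_2,N_3)$ appearing on the right-hand side of the statement. Being orthogonal to both $\rho_1$ and $\rho_2$, this vector is parallel to $\mathbf{n}$; it is nonzero because its third coordinate $c_1c_2-r_{12}r_{21}=N_3$ is a sum of products of the $r_{ij}$, all positive by the previous lemma. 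Hence $(N_1,N_2,N_3) = \lambda\,\mathbf{n}$ for some $\lambda>0$, and since $\mathbf{n}$ is primitive, the standard identity $\rho_1\times\rho_2 = \pm\,[\Lambda : \Z\rho_1+\Z\rho_2]\,\mathbf{n}$ shows $\lambda$ is the positive integer equal to the index of the sublattice $\Z\rho_1+\Z\rho_2$ in $\Lambda$ (equivalently $\Z\rho_1+\Z\rho_2+\Z\rho_3$, since $\rho_1+\rho_2+\rho_3 = 0$).

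It then remains to prove $\lambda = 1$, i.e.\ that $\rho_1,\rho_2$ already generate all of $\Lambda$; this is the main obstacle, since proportionality by itself leaves the common factor $\lambda$ undetermined. I would settle it using the theory of Betti elements: the relations $\rho_1,\rho_2,\rho_3$ coming from the Betti elements $c_1n_1,c_2n_2,c_3n_3$ form a presentation of $M$, hence generate the defining congruence $\sigma = \{(u,w)\in\N^3\times\N^3 : u\cdot\mathbf{n} = w\cdot\mathbf{n}\}$ (this is exactly where the preceding positivity lemma is needed, as in \cite{rosales2004numerical}). Given any $v\in\Lambda$, I would split it as $v = v^{+}-v^{-}$ into its nonnegative and nonpositive parts; then $v^{+}\cdot\mathbf{n} = v^{-}\cdot\mathbf{n}$, so $(v^{+},v^{-})\in\sigma$, and a derivation of this pair from $\rho_1,\rho_2,\rho_3$ (each elementary step altering the vector by some $\pm\rho_i$) writes $v=v^{+}-v^{-}$ as a $\Z$-combination of the $\rho_i$. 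Thus $\Lambda = \Z\rho_1+\Z\rho_2+\Z\rho_3$, forcing $\lambda=1$ and $(N_1,N_2,N_3) = \mathbf{n}$, which is precisely the claimed system of identities.
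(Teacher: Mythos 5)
Your argument is correct, but it is a genuinely different proof from the one in the paper. The paper's proof is a citation: it observes that Rosales' Lemmas 2--5 in \cite{rosales2004numerical} use pairwise coprimality of the generators only through Lemma 1 of that paper (positivity of the $r_{ij}$), which the preceding lemma of this paper recovers under the weaker hypothesis $\abs{\Betti(M)}=3$, so Rosales' elementary manipulations go through verbatim. You instead reprove the formulas from scratch: the expansion of $\rho_1\times\rho_2$ and its simplification via Lemma \ref{Lemma:ColumnAdd} are correct, positivity of the $r_{ij}$ makes this vector a positive integer multiple $\lambda\mathbf{n}$ of the primitive vector $\mathbf{n}$, and the lattice-index identity correctly reduces everything to showing $\Z\rho_1+\Z\rho_2=\Lambda$ (using $\rho_1+\rho_2+\rho_3=0$, which is again Lemma \ref{Lemma:ColumnAdd}). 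This is in effect a lattice-theoretic proof of Herzog's structure theorem---the generators appear as the $2\times 2$ minors of the matrix of relations---and it has the merit of being essentially self-contained, where the paper's proof is entirely dependent on an external source.

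The one step you should tighten is the assertion that $\rho_1,\rho_2,\rho_3$ ``form a presentation'' of $M$, i.e.\ that the three pairs $(c_ie_i,\, r_{ij}e_j+r_{ik}e_k)$ generate the kernel congruence $\sigma$. The general theorem you are invoking (due to Rosales and Garc\'ia-S\'anchez, and not quoted anywhere in this paper, so it needs its own citation) says that a family of pairs generates $\sigma$ provided that, for each Betti element $b$, the pairs lying over $b$ connect all connected components of $\nabla_b$. So you must also check that each $\nabla_{c_in_i}$ has exactly two components, with your single pair joining them; one pair per Betti element does not suffice in general. This is exactly where the positivity lemma enters, as you guessed, but the check should be spelled out: by minimality of $c_i$, every factorization of $c_in_i$ other than $c_ie_i$ has $i$-th coordinate zero, so $c_ie_i$ is an isolated vertex of $\nabla_{c_in_i}$; and since $r_{ij},r_{ik}>0$, the vertex $r_{ij}e_j+r_{ik}e_k$ has full support on $\{j,k\}$ and is therefore adjacent to every remaining vertex. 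Hence $\nabla_{c_in_i}$ has exactly two components and your pair connects them. With this check and the missing citation added, the proof is complete; the derivation-chain argument converting congruence generation into lattice generation, and the index identity for the saturated lattice $\Lambda$, are standard and fine as written.
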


\begin{proof}
	This follows from the proofs of \cite[Lemmas 1, 2, 3, 4, and 5]{rosales2004numerical}, noting that the assumption that the generators are pairwise coprime is not needed except to obtain the result of \cite[Lemma 1]{rosales2004numerical}, which can be obtained under the assumption that $\abs{\Betti(M)} = 3$ by the previous lemma. 
\end{proof}

This allows us to establish a bijection extending that of Theorem \ref{Thm:CoprimeBijection}. To state the extended bijection, we extend the definition of a 0-matrix found in \cite[p. 492]{rosales2004numerical}.

\begin{definition}
	A \textbf{0-matrix} is a $3\times 3$ matrix of the form
	\[
	A = \begin{pmatrix}
	0 & a_{12} & a_{13} \\
	a_{21} & 0 & a_{23}\\
	a_{31} & a_{32} & 0
	\end{pmatrix},
	\]
	where $a_{12}, a_{13}, a_{21}, a_{23}, a_{31}, a_{32} \in \N \setminus\{0\}$ are such that $\langle A_1, A_2, A_3 \rangle$ is a numerical monoid with embedding dimension three and three Betti elements, with
	\begin{align*}
	A_1 = a_{12}a_{13} + a_{12}a_{23} + a_{13}a_{32},\\
	A_2 = a_{13}a_{21} + a_{21}a_{23} + a_{23}a_{31},\\
	A_3 = a_{12}a_{31} + a_{21}a_{32} + a_{31}a_{32}. 
	\end{align*}
\end{definition}

\begin{theorem}\label{Thm:Bijection}
	Let $X$ be the set consisting of all numerical monoids with embedding dimension three and three Betti elements. Let $Y$ be the set 0-matrices. There exists a bijection between $X$ and $Y$ given by the assignments
	\[
	\langle n_1, n_2, n_3 \rangle \mapsto \begin{pmatrix}
	0 & r_{12} & r_{13} \\
	r_{21} & 0 & r_{23}\\
	r_{31} & r_{32} & 0
	\end{pmatrix} \text{ and } \begin{pmatrix}
	0 & a_{12} & a_{13} \\
	a_{21} & 0 & a_{23}\\
	a_{31} & a_{32} & 0
	\end{pmatrix} \mapsto \langle A_1, A_2, A_3 \rangle . 
	\]

\end{theorem}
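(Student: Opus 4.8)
The plan is to establish that the two assignments in the statement are mutually inverse bijections between $X$ and $Y$. The heart of the argument is already contained in the two preceding lemmas, so the proof is largely a matter of assembling them and verifying that both maps are well-defined on the correct sets. I would proceed as follows.

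First I would verify that the forward map $\langle n_1, n_2, n_3 \rangle \mapsto (r_{ij})$ lands in $Y$. Given $M \in X$, the coefficients $r_{ij}$ are defined from the Betti relations $c_in_i = \sum_{j\neq i} r_{ij}n_j$, and by the first lemma above each $r_{ij}$ is a positive integer, so the resulting matrix has the required nonzero off-diagonal entries. To confirm it is a $0$-matrix, I must check that forming $A_1, A_2, A_3$ from these entries recovers a numerical monoid of embedding dimension three with three Betti elements. This is exactly what Lemma~\ref{Lem:RecoverGeneratorsFromBetti3} gives: it shows $A_i = n_i$ for each $i$, so $\langle A_1, A_2, A_3 \rangle = M \in X$, which is by hypothesis such a monoid. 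Hence the matrix is a $0$-matrix and the forward map is well-defined.

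Next I would check the backward map $A \mapsto \langle A_1, A_2, A_3 \rangle$ is well-defined and that the two maps compose to the identity in both directions. Well-definedness is immediate from the definition of $0$-matrix, which requires $\langle A_1, A_2, A_3 \rangle$ to lie in $X$. For the composition $X \to Y \to X$: starting from $M$, the forward map produces $(r_{ij})$ and the backward map returns $\langle A_1, A_2, A_3 \rangle$, which equals $M$ precisely by Lemma~\ref{Lem:RecoverGeneratorsFromBetti3}. For the composition $Y \to X \to Y$: starting from a $0$-matrix $A$, I set $M = \langle A_1, A_2, A_3 \rangle$ and must show the coefficients $r_{ij}$ computed from $M$ equal the entries $a_{ij}$. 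This is where I would invoke the uniqueness established through the cited \cite[Lemmas 2, 3, 4, and 5]{rosales2004numerical}: those lemmas, which hold under the three-Betti-element assumption by the remark preceding Lemma~\ref{Lem:RecoverGeneratorsFromBetti3}, show that the Betti relations determine the $r_{ij}$ uniquely, so the reconstruction forces $r_{ij} = a_{ij}$.

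I expect the main obstacle to be the direction $Y \to X \to Y$, specifically verifying that the $r_{ij}$ of the monoid $\langle A_1, A_2, A_3\rangle$ coincide with the original $a_{ij}$. This requires that the expressions $c_in_i = \sum_{j \neq i} r_{ij} n_j$ admit only one solution in nonnegative integers giving minimal $c_i$, i.e.\ uniqueness of the coefficients. Under pairwise coprimality this is Theorem~\ref{Thm:CoprimeBijection}; the work here is to confirm that the proofs of \cite[Lemmas 2, 3, 4, and 5]{rosales2004numerical} go through verbatim once positivity of the $r_{ij}$ is supplied by the first lemma, relying only on $\abs{\Betti(M)} = 3$ rather than coprimality. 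Granting that, the two compositions are identities and the asserted bijection follows.
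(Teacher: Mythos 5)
Your overall architecture---both assignments are well defined and mutually inverse---is essentially the paper's own proof, which phrases it as injectivity plus surjectivity of the matrix-to-monoid map, and your treatment of the direction $X \to Y \to X$ via Lemma \ref{Lem:RecoverGeneratorsFromBetti3} coincides with the paper's surjectivity argument. The gap is in the direction you yourself flag as the main obstacle, $Y \to X \to Y$. Given a $0$-matrix $A = (a_{ij})$ and $M = \langle A_1, A_2, A_3 \rangle$, a direct computation shows $(a_{ji} + a_{ki})A_i = a_{ij}A_j + a_{ik}A_k$, so the entries of $A$ exhibit \emph{some} multiple of $A_i$ lying in the monoid generated by the other two generators. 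But the coefficients $r_{ij}$ of $M$ are defined through the \emph{minimal} such multiple $c_i$. Uniqueness of the coefficients realizing $c_i$---which is all you propose to extract from \cite[Lemmas 2, 3, 4, and 5]{rosales2004numerical}---does not rule out the possibility $c_i < a_{ji}+a_{ki}$; if that inequality were strict, the $r_{ij}$ of $M$ would simply differ from the $a_{ij}$, and no contradiction with uniqueness would arise. So ``uniqueness forces $r_{ij} = a_{ij}$'' is a non sequitur unless you also prove that the column sums of $A$ realize the minimal multiples $c_i$ of $M$, i.e.\ that the relations encoded by $A$ occur at the Betti elements of $M$. That is precisely the content of \cite[Theorem 8]{rosales2004numerical}, which the paper cites for exactly this step (noting that it survives the weakening from pairwise coprimality to $\abs{\Betti(M)} = 3$); Lemmas 2--5 are what feed the recovery formulas of Lemma \ref{Lem:RecoverGeneratorsFromBetti3} and do not deliver this minimality statement.

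A secondary manifestation of the same gap: your first paragraph already treats $M \mapsto (r_{ij})$ as a well-defined map, which presupposes that the $r_{ij}$ are uniquely determined by $M$. Under pairwise coprimality this is known, but under the weaker hypothesis $\abs{\Betti(M)} = 3$ it is established nowhere in your argument---it is deferred to the same misattributed citation. The paper's injective-plus-surjective formulation sidesteps this issue entirely: once the map $Y \to X$ is shown to be a bijection, any valid matrix of coefficients $r_{ij}$ for $M$ is a preimage of $M$ and hence is \emph{the} unique preimage, so well-definedness and uniqueness of the $r_{ij}$ come out as consequences rather than being needed as inputs.
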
 
\begin{proof}
	The map from matrices to numerical monoids is injective due to \cite[Theorem 8]{rosales2004numerical}, again noting that the result holds under the weaker condition of three Betti elements. This map surjective due to Lemma \ref{Lem:RecoverGeneratorsFromBetti3}. 
\end{proof}

	Given a numerical monoid $M$ with embedding dimension three and exactly three Betti elements, we can use the bijection in Theorem \ref{Thm:Bijection} to obtain a 0-matrix, which we call the \textbf{0-matrix associated with $M$}.

\section{Longest and Shortest Factorization Lengths}

\indent\indent In this section, we consider numerical monoids $\langle n_1, n_2, n_3 \rangle$ of embedding dimension three with $n_1 < n_2 < n_3$. We first establish a few results to narrow down where exceptions to $L(x + n_1) = L(x) + 1$ or $\ell(x + n_3) = \ell(x) +1$ can occur. We then apply these results separately to numerical monoids of embedding dimension three with three Betti elements and those with at most two Betti elements.

\begin{lemma}\label{Lem:Non1}
	Let $M = \langle n_1, \dots, n_k \rangle$ be a numerical monoid with $n_1 < \cdots < n_k$ of embedding dimension $k$. The followings statements hold. 
	\begin{enumerate}
		\item 	 Suppose for $x \in M$ there exists $(a_1, \dots, a_k) \in \mathsf{Z}(x+n_1)$ with $a_1 > 0$ and $a_1 + \cdots + a_k = L(x+n_1)$. Then $L(x+n_1) = L(x) + 1$.
		\item  Suppose for $x \in M$ there exists $(a_1, \dots, a_k) \in \mathsf{Z}(x+n_k)$ with $a_k > 0$ and $a_1 + \cdots + a_k = \ell(x+n_k)$. Then $\ell(x+n_k) = \ell(x) + 1$.
	\end{enumerate}
\end{lemma}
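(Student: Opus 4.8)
The plan is to prove each statement by establishing two opposing inequalities, only one of which actually uses the hypothesis. The underlying principle is the elementary correspondence between factorizations of $x$ and of $x+n_i$ obtained by adjusting a single coordinate of a tuple by one, which changes the length by exactly one.

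For part (1), first I would note that the inequality $L(x+n_1) \geq L(x) + 1$ holds unconditionally. Indeed, given any factorization $(b_1, \dots, b_k) \in \mathsf{Z}(x)$ realizing the length $L(x)$, the tuple $(b_1 + 1, b_2, \dots, b_k)$ lies in $\mathsf{Z}(x+n_1)$ and has length $L(x) + 1$, so the maximum length over $\mathsf{Z}(x+n_1)$ is at least $L(x) + 1$. The reverse inequality $L(x+n_1) \leq L(x) + 1$ is where the hypothesis enters: taking the given factorization $(a_1, \dots, a_k) \in \mathsf{Z}(x+n_1)$ with $a_1 > 0$ and length $L(x+n_1)$, I would form $(a_1 - 1, a_2, \dots, a_k)$. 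Since $a_1 > 0$, this tuple remains in $\N^k$, and it factors $(x+n_1) - n_1 = x$ with length $L(x+n_1) - 1$; hence $L(x) \geq L(x+n_1) - 1$. Combining the two inequalities gives $L(x+n_1) = L(x) + 1$.

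For part (2), the argument is dual, with $n_k$ replacing $n_1$ and minima replacing maxima, so the directions of the inequalities reverse. The unconditional direction yields $\ell(x+n_k) \leq \ell(x) + 1$ by appending one copy of $n_k$ to a shortest factorization of $x$. The hypothesis then supplies a shortest factorization $(a_1, \dots, a_k) \in \mathsf{Z}(x+n_k)$ with $a_k > 0$, and removing one copy of $n_k$ produces $(a_1, \dots, a_{k-1}, a_k - 1) \in \mathsf{Z}(x)$ of length $\ell(x+n_k) - 1$, so $\ell(x) \leq \ell(x+n_k) - 1$, i.e.\ $\ell(x+n_k) \geq \ell(x) + 1$. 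The two inequalities together give the claimed identity.

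I do not expect a substantial obstacle here; the lemma's entire content is the single subtraction step, and the only point requiring care is ensuring the modified coordinate stays nonnegative, which is exactly what the hypotheses $a_1 > 0$ and $a_k > 0$ guarantee. It is worth emphasizing in the write-up that the extremality hypotheses ($a_1 + \cdots + a_k = L(x+n_1)$ and $a_1 + \cdots + a_k = \ell(x+n_k)$) are essential: without them the subtraction only produces some factorization of $x$, not one whose length pins down $L(x)$ or $\ell(x)$.
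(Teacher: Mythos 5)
Your proof is correct and is essentially identical to the paper's: both establish $L(x)+1 \leq L(x+n_1)$ unconditionally by adding a copy of $n_1$ to a longest factorization of $x$, and then use the hypothesis $a_1 > 0$ on a longest factorization of $x+n_1$ to subtract a copy of $n_1$ and obtain the reverse inequality, with the dual argument for $\ell$. No gaps; the paper simply abbreviates part (2) as ``similar,'' which you spell out.
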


\begin{proof}
	We always have $L(x) + 1 \leq L(x+n_1)$ since any factorization $(a_1', \dots, a_k')$ of $x$ of the longest length gives us a factorization $(a_1' + 1, a_2' \dots, a_k')$ of $x+n_1$. 
	
	On the other hand, since $a_1 > 0$, the factorization $(a_1, \dots, a_k)$ of $x+n_1$ gives us that $(a_1 - 1, a_2 \dots, a_k)$ is a factorization of $x$. Thus, $L(x+n_1) - 1 \leq L(x)$. 
	
	Combining the two inequalities yields
		\[
	L(x+n_1) = L(x) + 1.
	\]
	
	The proof for the case involving the shortest factorization length is similar.
\end{proof}

Bringing our focus to a numerical monoid $M = \langle n_1, n_2, n_3 \rangle$ of embedding dimension three with $n_1 < n_2 < n_3$, we can find exceptions to $L(x+n_1) = L(x) + 1$ by only considering $x \in M$ such that $x + n_1 \in \langle n_2, n_3 \rangle$. It turns out only one of those elements needs to be tested to see if there exists an $x \in M$ such that $L(x+n_1) \neq L(x) + 1$. Similarly, we look at $x \in M$ such that $x+n_3 \in \langle n_1, n_2 \rangle$ to find exceptions to $\ell(x+n_3) = \ell(x) + 1$. We will first need a definition.

\begin{definition}
	Let $M = \langle n_1, n_2, n_3 \rangle$ with $n_1 < n_2 < n_3$ be a numerical monoid of embedding dimension three. Define
	\[
		\alpha(M) = \min\{b \in \N \mid bn_2 - n_1 \in M \} \text{\quad and \quad} \beta(M) = \min\{b \in \N \mid bn_2 - n_3 \in M \}.
	\]
\end{definition}

This definition allows us to state the following result. 

\begin{theorem}\label{Thm:LongestReduction}
	Let $M = \langle n_1, n_2, n_3 \rangle$ be a numerical monoid of embedding dimension three with $n_1 < n_2 < n_3$. The following statements hold.
	\begin{enumerate}
		\item There exists $x \in M$ with $L(x+n_1) \neq L(x) + 1$ if and only if $L(\alpha(M)n_2) \neq L(\alpha(M)n_2 - n_1) + 1$.
		\item There exists $x \in M$ with $\ell(x+n_3) \neq \ell(x) + 1$ if and only if $\ell(\beta(M)n_2) \neq \ell(\beta(M)n_2-n_3) + 1$.
	\end{enumerate}
\end{theorem}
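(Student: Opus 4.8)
The two statements are mirror images of one another: part~(2) is obtained from part~(1) by interchanging the roles of $n_1$ and $n_3$, replacing $L$ by $\ell$, ``longest'' by ``shortest'', $\alpha(M)$ by $\beta(M)$, and Lemma~\ref{Lem:Non1}(1) by Lemma~\ref{Lem:Non1}(2). The plan is therefore to prove part~(1) in full and to obtain part~(2) by the symmetric argument. The reverse implication of part~(1) is essentially definitional: since $M$ is cofinite, $bn_2 - n_1 \in M$ for all large $b$, so $\alpha(M)$ is well defined and $\alpha(M)n_2 - n_1 \in M$. Taking $x = \alpha(M)n_2 - n_1$, the hypothesis $L(\alpha(M)n_2) \neq L(\alpha(M)n_2 - n_1) + 1$ says precisely that this particular $x$ violates $L(x+n_1) = L(x)+1$, so a witness exists.

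For the forward implication I would begin with an arbitrary witness $x$ and reduce it to the canonical one. Put $y = x + n_1$. By Lemma~\ref{Lem:Non1}(1), every longest factorization of $y$ has first coordinate $0$, so $y \in \langle n_2, n_3\rangle$; fixing a longest factorization $y = a_2 n_2 + a_3 n_3$, we have $a_2 + a_3 = L(y)$ and $L(x) \le L(y) - 2$. Two structural facts constrain this factorization. First, by definition of $c_1$ there are $r_{12}, r_{13}$ with $c_1 n_1 = r_{12}n_2 + r_{13}n_3$, and $c_1 > r_{12} + r_{13}$ because $n_1 < n_2, n_3$ forces $c_1 n_1 = r_{12}n_2 + r_{13}n_3 > (r_{12}+r_{13})n_1$; consequently $a_2 \ge r_{12}$ and $a_3 \ge r_{13}$ cannot both hold, since otherwise replacing $r_{12}n_2 + r_{13}n_3$ by $c_1 n_1$ would give a strictly longer factorization of $y$ using $n_1$, contradicting Lemma~\ref{Lem:Non1}. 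Second, trading $n_3$'s for $n_2$'s inside $\langle n_2, n_3\rangle$ strictly increases length (as $n_2 < n_3$), so a longest factorization of $y$ carries as few copies of $n_3$ as possible.

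The core of the proof is then a descent driving $y$ down to $\alpha(M)n_2$ while keeping it a witness. First I would eliminate the $n_3$-component: if $a_3 \ge 1$, I would subtract a suitable generator from $y$ to produce a strictly smaller $y' \in \langle n_2, n_3\rangle$ with $y' - n_1 \in M$ whose longest factorizations still avoid $n_1$ and still exhibit a length gap of at least two, so that $y'$ is again a witness; iterating forces the $n_3$-component to $0$ and leaves $y = a_2 n_2$. I would then run the same descent on the $n_2$-exponent, removing one copy of $n_2$ at a time; this can be continued exactly as long as the result minus $n_1$ remains in $M$, and by the defining minimality of $\alpha(M)$ the process halts precisely at $a_2 = \alpha(M)$. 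Hence $\alpha(M)n_2$ is itself a witness, which is the desired conclusion.

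The hard part will be the descent, specifically verifying that each reduction preserves \emph{both} properties that make an element a witness: that $y' - n_1$ stays in $M$, and that every longest factorization of $y'$ continues to avoid $n_1$ with the length deficit remaining at least $2$. Controlling these two conditions simultaneously is the delicate point, and it is exactly here that the extremal definition of $\alpha(M)$ enters, since it is the obstruction that stops the descent once $y$ has become a pure multiple of $n_2$. The dichotomy ``$a_2 < r_{12}$ or $a_3 < r_{13}$'' obtained above, together with the length-increasing $n_3 \to n_2$ trade, is the tool I would use to certify that the reduced factorizations stay free of $n_1$ at each step.
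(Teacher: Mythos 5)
Your treatment of the reverse direction is fine, and your general plan (push an arbitrary witness down to the canonical one) is the right instinct, but the proposal has a genuine gap exactly where you flag it: the descent is never actually carried out, and the tools you name do not suffice to carry it out. The crux of each reduction step is showing that membership in the shifted monoid is preserved, i.e.\ that $y' - n_1 \in M$ when you pass from $y$ to $y' = y - n_3$. Nothing you prove gives this: knowing $y - n_1 \in M$ only guarantees some factorization $y - n_1 = A'n_1 + B'n_2 + C'n_3$, and if $C' = 0$ in every such factorization, subtracting $n_3$ has no justification. Your dichotomy (``not both $a_2 \ge r_{12}$ and $a_3 \ge r_{13}$'') constrains the longest factorization of $y$, and the $n_3 \to n_2$ trade constrains factorizations inside $\langle n_2, n_3\rangle$, but neither says anything about which generators appear in factorizations of $y - n_1$, which is what the descent needs. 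So the proposal is a plan with its central lemma missing, not a proof.

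The missing ingredient is precisely the comparison the paper proves: writing the longest factorization of $y = x+n_1$ as $(0,b,c)$ and taking \emph{any} factorization $(A,B,C)$ of $y$ with $A > 0$ (one exists since $x \in M$), a short case analysis using $n_1 < n_2 < n_3$ and $b + c > A + B + C$ shows $c < C$. This single inequality does everything at once: subtracting $cn_3$ gives $bn_2 = An_1 + Bn_2 + (C-c)n_3$ with $A > 0$, hence $bn_2 - n_1 \in M$ and $b \ge \alpha(M)$ --- no step-by-step removal of $n_3$'s is needed. Moreover, the paper then avoids your second descent (on the $n_2$-exponent) entirely: taking a longest factorization $(a',b',c')$ of $\alpha(M)n_2 - n_1$ and adding $n_1 + (b - \alpha(M))n_2 + cn_3$ to both sides produces a factorization of $y$ using $n_1$, which must be short, and rearranging yields $L(\alpha(M)n_2) \ge \alpha(M) > L(\alpha(M)n_2 - n_1) + 1$ in one stroke. (For what it is worth, your $n_2$-descent could be repaired, since $\{b : bn_2 - n_1 \in M\}$ is upward closed and the length gap does propagate downward one $n_2$ at a time; but the $n_3$-elimination phase cannot be repaired without first proving something equivalent to $c < C$, at which point the descent is superfluous.)
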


\begin{proof}
	We only need to prove the forward direction. Suppose that there exists $x \in M$ with $L(x+n_1) \neq L(x) + 1$. Write
	\[
		x+n_1 = an_1 + bn_2 + cn_3,
	\]
	where $a, b, c \in \N$ and $a+b+c = L(x+n_1)$. Then $a = 0$ by Lemma \ref{Lem:Non1}. Since $x \in M$, we also have a factorization
	\[
		x+n_1 = An_1 + Bn_2 + Cn_3,
	\]
	with $A, B, C \in \N$ and $A > 0$. Because $A > 0$, the factorization $(A,B,C)$ of $x+n_1$ cannot be of the longest length. This yields
	\[
		L(x+n_1) = b + c > A + B + C.
	\]
	
	We want to show that $c < C$. Suppose on the contrary that $c \geq C$. Suppose further that $b \geq B$. Then $bn_2 +cn_3 = An_1 + Bn_2 + Cn_3$ becomes
	\[
		(b-B)n_2 + (c-C)n_3 = An_1.
	\]
	Since $n_1 < n_2, n_3$, we have $(b-B) + (c-C) < A$, implying $b+c < A+B+C$, a contradiction. Now consider the case when $c \geq C$ and $b < B$. Then $bn_2 +cn_3 = An_1 + Bn_2 + Cn_3$ becomes
	\[
		(c-C)n_3 = An_1 + (B-b)n_2.
	\]
	This implies $c-C < A + (B-b)$ and therefore $b+c < A+B+C$, a contradiction. 
	
	Therefore, $c < C$, which means $bn_2 = An_1 + Bn_2 + (C-c)n_3$. Since $A > 0$, we have that $bn_2 -n_1 \in M$ and thus $b \geq \alpha(M)$. 
	
	Now write
	\[
		\alpha(M)n_2 - n_1 = a'n_1+b'n_2 +c'n_3
	\]
	with $a', b', c' \in \N$ such that $a'+b'+c' = L(\alpha(M)n_2 - n_1)$. We add $n_1 + (b-\alpha(M))n_2 + cn_3$ to both sides to obtain
	\[
		bn_2 +cn_3 = (a'+1)n_1 +(b'+b-\alpha(M))n_2 + (c' +c)n_3.
	\]
	Because $a'+1 > 0$, we have that $b+c > a' + 1 + b' +b - \alpha(M) + c' +c$, which implies that
	\[
		L(\alpha(M)n_2) \geq \alpha(M) > a' + b' + c' + 1 = L(\alpha(M)n_2 - n_1) + 1.
	\]
	Thus, $L(\alpha(M)n_2) \neq L(\alpha(M)n_2 - n_1) + 1$. The proof for the result about the shortest factorization length is similar. 
\end{proof}

To investigate further, we would like to determine some properties of the invariants $\alpha(M)$ and $\beta(M)$.  

\begin{proposition}\label{Prop:LongestFacts}
	Let $M = \langle n_1, n_2, n_3 \rangle$ be a numerical monoid of embedding dimension three with $n_1 < n_2 < n_3$. Then
	\begin{enumerate}
		\item for all $(a,b,c) \in \mathsf{Z}(\alpha(M)n_2 - n_1)$, we have $b = 0$;
		\item if $L(\alpha(M)n_2)\neq L(\alpha(M)n_2-n_1)+1$, then $L(\alpha(M)n_2) = \alpha(M)$;
		\item $L(\alpha(M)n_2)\neq L(\alpha(M)n_2-n_1)+1$ if and only if for all $(a,b,c) \in \mathsf{Z}(\alpha(M)n_2)$ with $b = 0$, we have $a+c < \alpha(M)$;
		\item if $\alpha(M) = 2$ or $\alpha(M) = n_1$, then $L(\alpha(M)n_2) = L(\alpha(M)n_2-n_1) + 1$; and
		\item $\alpha(M) \leq n_1$. 
	\end{enumerate}

	Analogously, we have
		\begin{enumerate}
		\item for all $(a,b,c) \in \mathsf{Z}(\beta(M)n_2 - n_3)$, we have $b = 0$; 
		\item if $\ell(\beta(M)n_2)\neq \ell(\beta(M)n_2-n_1)+1$, then $\ell(\beta(M)n_2) = \beta(M)$;
		\item $\ell(\beta(M)n_2)\neq \ell(\beta(M)n_2-n_3)+1$ if and only if for all $(a,b,c) \in \mathsf{Z}(\beta(M)n_2)$ with $b = 0$, we have $a+c > \beta(M)$;
		\item if $\beta(M) = n_3$, then $\ell(\beta(M)n_2) = \ell(\beta(M)n_2-n_3) + 1$; and
		\item $\beta(M) \leq n_3$. 
	\end{enumerate}
\end{proposition}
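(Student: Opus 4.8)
The plan is to prove the five statements about $\alpha(M)$ and $L$ in a dependency order, then obtain the five statements about $\beta(M)$ and $\ell$ by a symmetric argument. Write $\alpha = \alpha(M)$ for brevity. Two facts will be used repeatedly: the trivial factorization $(0,\alpha,0) \in \mathsf{Z}(\alpha n_2)$ gives the lower bound $L(\alpha n_2) \geq \alpha$, and the proof of Lemma \ref{Lem:Non1} already records the general bound $L(\alpha n_2) \geq L(\alpha n_2 - n_1) + 1$. For part (1), I argue by minimality of $\alpha$: if some $(a,b,c) \in \mathsf{Z}(\alpha n_2 - n_1)$ had $b > 0$, then $\alpha n_2 - n_1 \geq b n_2$ forces $b < \alpha$, and subtracting $b n_2$ yields $(\alpha - b)n_2 - n_1 = a n_1 + c n_3 \in M$ with $0 < \alpha - b < \alpha$, contradicting the definition of $\alpha$. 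For part (5), I exhibit the witness $b = n_1$: since $n_1 n_2 - n_1 = (n_2 - 1)n_1 \in M$, we get $\alpha \leq n_1$.

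For part (2), I assume $L(\alpha n_2) \neq L(\alpha n_2 - n_1) + 1$ and take a longest factorization $(a,b,c)$ of $\alpha n_2$. Applying Lemma \ref{Lem:Non1}(1) with $x = \alpha n_2 - n_1$, a longest factorization with $a > 0$ would force the forbidden equality, so $a = 0$, giving $(\alpha - b)n_2 = c n_3$. If $c > 0$, then $n_3 > n_2$ forces $c < \alpha - b$, so the length $b + c < \alpha$, contradicting $L(\alpha n_2) \geq \alpha$. Hence $c = 0$, so $b = \alpha$ and $L(\alpha n_2) = \alpha$.

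For part (3), the forward direction uses part (2): assuming the inequality, $L(\alpha n_2) = \alpha$, so every $(a,0,c) \in \mathsf{Z}(\alpha n_2)$ has $a + c \leq \alpha$; should $a + c = \alpha$, then either $a > 0$ (a longest factorization with positive first coordinate, contradicting the assumption via Lemma \ref{Lem:Non1}(1)) or $a = 0$ (forcing $c n_3 = \alpha n_2$ with $c = \alpha$, i.e.\ $n_2 = n_3$), both impossible, so $a + c < \alpha$. For the converse, part (1) lets me send each $(a,0,c) \in \mathsf{Z}(\alpha n_2 - n_1)$ to $(a+1,0,c) \in \mathsf{Z}(\alpha n_2)$; the hypothesis then gives $(a+1)+c < \alpha$, so $L(\alpha n_2 - n_1) \leq \alpha - 2$, while $L(\alpha n_2) \geq \alpha$, making the two quantities differ by more than one. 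Part (4) is then immediate from part (3): when $\alpha = n_1$, the factorization $(n_2,0,0)$ of $\alpha n_2 = n_1 n_2$ has coordinate-sum $n_2 > n_1 = \alpha$; when $\alpha = 2$, part (1) writes $2n_2 - n_1 = a n_1 + c n_3$ with $a + c \geq 1$ (else $n_1 = 2n_2 > n_2$), so $(a+1,0,c) \in \mathsf{Z}(2n_2)$ has coordinate-sum $(a+1)+c \geq 2 = \alpha$. In either case the criterion of part (3) fails, yielding the desired equality.

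The statements for $\beta(M)$ and $\ell$ follow by the same arguments with the roles of $n_1$ and $n_3$ interchanged, Lemma \ref{Lem:Non1}(2) in place of Lemma \ref{Lem:Non1}(1), the trivial bound $\ell(\beta n_2) \leq \beta$, and all length inequalities reversed (so ``$<\alpha$'' becomes ``$>\beta$''); for the fifth statement the witness is $b = n_3$ via $n_3 n_2 - n_3 = (n_2-1)n_3$, and for the fourth the factorization $(0,0,n_2)$ of $\beta n_2 = n_2 n_3$ has coordinate-sum $n_2 < n_3 = \beta$. I expect the main obstacle to be parts (2) and (3): one must combine the monotone correspondence between factorizations of $\alpha n_2$ and of $\alpha n_2 - n_1$ with Lemma \ref{Lem:Non1} and the trivial bound $L(\alpha n_2) \geq \alpha$, while tracking carefully when the strict inequality $n_2 < n_3$ is invoked, since that is exactly what rules out the degenerate case $n_2 = n_3$.
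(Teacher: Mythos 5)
Your proof is correct and follows essentially the same route as the paper's: minimality of $\alpha(M)$ for (1), Lemma \ref{Lem:Non1} plus $n_2 < n_3$ to force the longest factorization of $\alpha(M)n_2$ to be $(0,\alpha(M),0)$ for (2), the same factorization-shifting criterion for (3), and the same witnesses $(n_2,0,0)$ and $(n_2-1)n_1$ for (4) and (5), with the $\beta(M)$ statements handled by the same symmetry. The only differences are cosmetic: you prove the converse of (3) directly rather than by contrapositive, and in the $\alpha(M)=2$ case of (4) you exhibit an explicit factorization of $2n_2$ with $b=0$ and $a+c\geq 2$ (which also settles the existence point the paper leaves implicit) instead of ruling out $(1,0,0)$ and $(0,0,1)$.
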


\begin{proof}
	We will prove the statements about $\alpha(M)$. The proofs for the statements about $\beta(M)$ are similar. 
	\begin{enumerate}
\item 	Let $(a,b,c) \in \mathsf{Z}(\alpha(M)n_2 - n_1)$. We have that $\alpha(M)n_2 - n_1 -bn_2 = (\alpha(M)-b)n_2 - n_1 \in M$. Because of the minimality of $\alpha(M)$, we must have $b = 0$. 
	
\item 	Now suppose that $L(\alpha(M)n_2)\neq L(\alpha(M)n_2-n_1)+1$. Let $(A,B,C)$ be a factorization of $\alpha(M)n_2$ such that $A+B+C = L(\alpha(M)n_2)$. Then $A = 0$ by Lemma \ref{Lem:Non1}. We must have $C = 0$ since $C > 0$ will not maximize $B+C$. Thus, $L(\alpha(M)n_2) = \alpha(M)$. 
	
\item 	Still supposing that $L(\alpha(M)n_2)\neq L(\alpha(M)n_2-n_1)+1$, we take $(a,b,c) \in \mathsf{Z}(\alpha(M)n_2)$ with $b = 0$. If $a = 0$, then $cn_3 = \alpha(M)n_2$ implies $a+c = c < \alpha(M)$. If $a > 0$, then $a+c < L(\alpha(M)n_2) = \alpha(M)$ by Lemma \ref{Lem:Non1}. 
	
	On the other hand, suppose $L(\alpha(M)n_2) = L(\alpha(M)n_2-n_1) + 1$. Let $(A,B,C) \in \mathsf{Z}(\alpha(M)n_2-n_1)$ be a factorization such that $A+B+C = L(\alpha(M)n_2-n_1)$. By item 1, $B = 0$. Then $(A+1, 0, C) \in \mathsf{Z}(\alpha(M)n_2)$ and $A+1+B+C = L(\alpha(M)n_2)$. Therefore, $(A+1) + C \geq \alpha(M)$ since $(0,\alpha(M),0) \in \mathsf{Z}(\alpha(M)n_2)$.

\item 	If $\alpha(M) = 2$, then there does not exist $(a, b, c) \in \mathsf{Z}(\alpha(M)n_2)$ with $b =0$ and $a +c < 2$, since this would require $(1,0,0)$ or $(0,0,1) \in \mathsf{Z}(\alpha(M)n_2)$. This implies $L(\alpha(M)n_2) = L(\alpha(M)n_2 - n_1) + 1$ by item 3.
	
	If $\alpha(M) = n_1$, then $(n_2,0,0) \in \mathsf{Z}(\alpha(M)n_2)$ and $n_2 > \alpha(M)$, so $L(\alpha(M)n_2) = L(\alpha(M)n_2 - n_1) + 1$ by item 3.
	
\item $n_1n_2 - n_1 = (n_2-1)n_1 \in M$. 	
\end{enumerate}
\end{proof}

\begin{remark}
	Notice that $\beta(M) = 2$ does not guarantee $\ell(\beta(M)n_2) = \ell(\beta(M)n_2-n_3) + 1$ unlike the analogous statement for $\alpha(M)=2$ in item 4 of Proposition \ref{Prop:LongestFacts}. For example, if $M = \langle 6, 23,34\rangle$, we calculate that $\beta(M) = 2$, but $\ell(2\cdot 23) = 2$ and $\ell(2\cdot 23 - 34) + 1 = 3$.
\end{remark}

Now we focus on numerical monoids with embedding dimension three and three Betti elements. We now make use of the bijection in Theorem \ref{Thm:Bijection}. Using this bijection, we can determine whether a numerical monoid $\langle n_1, n_2, n_3\rangle$ of embedding dimension three with exactly three Betti elements has an exception to $L(x+n_1) = L(x) + 1$ given its associated 0-matrix.

\begin{theorem}\label{Thm:Longest0Matrix}
	Let $M = \langle n_1, n_2, n_3\rangle$ be a numerical monoid of embedding dimension three such that $n_1 < n_2 < n_3$ and $\abs{\Betti(M)} = 3$. Let $A = (r_{ij})$ be the 0-matrix associated with $M$ as in Theorem \ref{Thm:Bijection}. The following statements hold.
	\begin{enumerate}
		\item The identity $L(x+n_1) = L(x)+1$ holds for all $x \in M$ if and only if $r_{12} + r_{32} \leq r_{21} + r_{23}$.
		\item The identity $\ell(x+n_3) = \ell(x) +1$ holds for all $x \in M$ if and only if $r_{12} + r_{32} \geq r_{21} + r_{23}$.
	\end{enumerate}

\end{theorem}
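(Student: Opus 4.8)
The plan is to reduce both parts to a single statement about the factorizations of the Betti element $c_2 n_2$, and then read the inequality directly off the associated $0$-matrix. The two reductions I will lean on are Theorem \ref{Thm:LongestReduction}, which turns the universal identities into facts about $\alpha(M)n_2$ and $\beta(M)n_2$, and Proposition \ref{Prop:LongestFacts}, which translates those into statements about the quantity $a+c$ over factorizations $(a,0,c)$.

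First I would show that $\alpha(M) = \beta(M) = c_2 = r_{12}+r_{32}$. Since $\abs{\Betti(M)} = 3$, every $r_{ij}$ is positive, and from $c_2 n_2 = r_{21}n_1 + r_{23}n_3$ we get $c_2 n_2 - n_1 = (r_{21}-1)n_1 + r_{23}n_3 \in M$ and $c_2 n_2 - n_3 = r_{21}n_1 + (r_{23}-1)n_3 \in M$, whence $\alpha(M),\beta(M) \le c_2$. Conversely, item 1 of Proposition \ref{Prop:LongestFacts} forces every factorization of $\alpha(M)n_2 - n_1$ to have zero $n_2$-component, so $\alpha(M)n_2$ admits a factorization with zero $n_2$-component; minimality of $c_2$ then gives $\alpha(M) \ge c_2$, and the same reasoning gives $\beta(M) \ge c_2$. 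Lemma \ref{Lemma:ColumnAdd} identifies $c_2 = r_{12}+r_{32}$.

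The crux is to prove that $(r_{21},0,r_{23})$ is the \emph{unique} factorization of $c_2 n_2$ with zero $n_2$-component, so that the only value of $a+c$ achieved over all $(a,0,c) \in \mathsf{Z}(c_2 n_2)$ is $r_{21}+r_{23}$. Given any such $(a,0,c)$, subtracting $c_2 n_2 = r_{21}n_1 + r_{23}n_3$ gives $(a-r_{21})n_1 = (r_{23}-c)n_3$. If $c < r_{23}$, the right-hand side is a positive multiple of $n_3$ that equals a multiple of $n_1$, so it has a factorization with zero $n_3$-component; minimality of $c_3$ forces $r_{23}-c \ge c_3 = r_{13}+r_{23} > r_{23}$ (using $r_{13}>0$), which is impossible. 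Symmetrically, if $a < r_{21}$, then $(r_{21}-a)n_1$ is a positive multiple of $n_3$, forcing $r_{21}-a \ge c_1 = r_{21}+r_{31} > r_{21}$, again impossible. Hence $a \ge r_{21}$ and $c \ge r_{23}$, which combined with $an_1 + cn_3 = r_{21}n_1 + r_{23}n_3$ forces $a = r_{21}$ and $c = r_{23}$.

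Finally I would assemble the equivalences. By Theorem \ref{Thm:LongestReduction}(1) the identity $L(x+n_1) = L(x)+1$ holds for all $x$ iff $L(\alpha(M)n_2) = L(\alpha(M)n_2 - n_1)+1$, which by item 3 of Proposition \ref{Prop:LongestFacts} holds iff some $(a,0,c) \in \mathsf{Z}(\alpha(M)n_2)$ satisfies $a+c \ge \alpha(M)$; by the uniqueness step this is precisely $r_{21}+r_{23} \ge r_{12}+r_{32}$, giving part 1. For part 2, Theorem \ref{Thm:LongestReduction}(2) together with the $\beta$-analogue of item 3 gives that $\ell(x+n_3) = \ell(x)+1$ holds for all $x$ iff some $(a,0,c) \in \mathsf{Z}(\beta(M)n_2)$ satisfies $a+c \le \beta(M)$, i.e. $r_{21}+r_{23} \le r_{12}+r_{32}$. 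The main obstacle is the uniqueness claim in the third paragraph; once the value of $a+c$ over zero-$n_2$ factorizations is pinned to $r_{21}+r_{23}$ and $\alpha(M)=\beta(M)=r_{12}+r_{32}$ is established, both equivalences fall out immediately, with the two opposite inequalities arising from the fact that the longest-length criterion asks for $a+c \ge \alpha(M)$ while the shortest-length criterion asks for $a+c \le \beta(M)$.
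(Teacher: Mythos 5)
Your proposal is correct and follows essentially the same route as the paper's proof: reduce via Theorem \ref{Thm:LongestReduction} and Proposition \ref{Prop:LongestFacts}(3), identify $\alpha(M) = \beta(M) = c_2 = r_{12}+r_{32}$ using Lemma \ref{Lemma:ColumnAdd}, and pin down $(r_{21},0,r_{23})$ as the unique factorization of $c_2n_2$ with zero $n_2$-component. The only difference is one of detail: where the paper asserts $c_2 = \alpha(M)$ briefly and cites Theorem \ref{Thm:Bijection} for the uniqueness of that factorization, you prove both facts directly from the minimality of the $c_i$ and the positivity of the $r_{ij}$, which is a self-contained filling-in of the same argument.
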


\begin{proof}
	Using Theorem \ref{Thm:LongestReduction} and Proposition \ref{Prop:LongestFacts}(3), there exists $x \in M$ such that $L(x+n_1) \neq L(x) + 1$ if and only if for all $(a,b,c) \in \mathsf{Z}(mn_2)$ with $b = 0$, we have $a+c < \alpha(M)$. Let $c_2$ be the invariant defined in Section \ref{Sec:Betti}. Since $r_{12} > 0$, we have $c_2 = \alpha(M)$. Because $\abs{\Betti(M)} = 3$, Theorem \ref{Thm:Bijection} implies that the only possibility is that $a=r_{21}$ and $c=r_{23}$. We also have $r_{12} + r_{32} = c_2$ by Lemma \ref{Lemma:ColumnAdd}. Thus, $a+c < \alpha(M)$ is equivalent to 
	\[
		r_{12} + r_{32} > r_{21} + r_{23}.
	\]
	
	The proof for the statement about $\ell(x+n_3) = \ell(x) + 1$ is similar.
\end{proof}

This theorem allows us to produce numerical monoids of embedding dimension three with large generators that have exceptions to $L(x+n_1) = L(x) + 1$ or $\ell(x+n_3) = \ell(x) +1$. 

\begin{example}
	We can verify that the 0-matrix
	\[
		\begin{pmatrix}
		0&4321&6\\
		2000&0&89\\
		7001&1234&0
		\end{pmatrix}
	\]
	is associated with the numerical monoid $\langle 417899, 813089, 41358555 \rangle$, 
	which has three Betti elements. Here, $\alpha(M) = 5555$,
	\[
		L(5555\cdot 813089) = 5555, \text{\quad and \quad} L(5555\cdot 813089 - 417899) = 2088,
	\]
	with the latter coming from the factorization $5555\cdot 813089 - 417899 = 1999\cdot 417899 + 89\cdot 41358555$. 

\end{example}

Now assume that $M = \langle n_1, n_2, n_3 \rangle$ is a numerical monoid of embedding dimension three and $\abs{\textup{Betti}(M)} \leq 2$. By Theorem \ref{Thm:2BettiForm}, the monoid $M$ can be written in the form $\langle am_1, am_2, bm_1 + cm_2 \rangle$ for some $a,b,c,m_1,m_2 \in \N$, and whether there is an exception to $L(x+n_1) = L(x) + 1$ or $\ell(x+n_3) = \ell(x) + 1$ can be determined from this form. 

\begin{theorem}\label{Thm:2Betti}
	Let $M = \langle n_1, n_2, n_3 \rangle$ be a numerical monoid of embedding dimension three and $\abs{\textup{Betti}(M)} \leq 2$. Write $M$ in the form
	\[
	M = \langle am_1, am_2, bm_1 + cm_2 \rangle
	\]
	for some $a,b,c, m_1, m_2 \in \N$ such that $ 1 < m_1 < m_2$, $\gcd(m_1, m_2) = 1$, $a\geq 2$, $ b+c \geq 2$, and $\gcd(a, bm_1+cm_2) = 1$. The following statements hold.
	\begin{enumerate}
		\item If $am_1 < am_2 < bm_1 + cm_2$, then $L(x+n_1) = L(x) + 1$ for all $x \in M$. 
		\item If $am_1 < bm_1 + cm_2 < am_2$ and $b \neq 0$, then $L(x+n_1) = L(x) + 1$ for all $x\in M$ if and only if $a \leq b + c + \left\lfloor \frac{c}{m_1} \right\rfloor (m_2 - m_1)$.
		\item If $bm_1 + cm_2 < am_1 < am_2$, then $L(x+n_1) = L(x) + 1$ for all $x\in M$ if and only if $b+ \left\lceil \frac{c}{m_1}\right\rceil m_2 \leq  a + \left\lceil \frac{c}{m_1}\right\rceil m_1 - c$.
		\item If $am_1 < bm_1 + cm_2 < am_2$ and $b = 0$, then $L(x+n_1) = L(x) + 1$ for all $x \in M$ if and only if $\left\lceil \frac{m_1}{c}\right\rceil a \leq \left\lceil \frac{m_1}{c}\right\rceil c + m_2 - m_1$. 
	\end{enumerate}
	For the shortest factorization lengths, the following statements hold.
	\begin{enumerate}
		\item If $am_1 < am_2 < bm_1 + cm_2$, then $\ell(x+n_3) = \ell(x) + 1$ for all $x \in M$ if and only if $c + \left\lceil \frac{b}{m_2}\right\rceil m_1 \geq \left\lceil \frac{b}{m_2}\right\rceil m_2 - b + a$. 
		
		\item If $am_1 < bm_1 + cm_2 < am_2$ and $c \neq 0$, then $\ell(x+n_3) = \ell(x) + 1$ for all $x \in M$ if and only if $a \geq b + c + \left\lfloor\frac{b}{m_2}\right\rfloor (m_1 - m_2)$. 
		
		\item If $bm_1 + cm_2 < am_1 < am_2$, then $\ell(x+n_3) = \ell(x) + 1$ for all $x \in M$.
		
		\item If $am_1 < bm_1 + cm_2 < am_2$ and $c = 0$, then $\ell(x+n_3) = \ell(x) + 1$ for all $x \in M$ if and only if $ \left\lceil \frac{m_2}{b}\right\rceil a \geq \left\lceil \frac{m_2}{b}\right\rceil b - m_2 + m_1$. 
	\end{enumerate}
\end{theorem}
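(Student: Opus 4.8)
The plan is to treat both halves of the theorem uniformly through the reduction already in place. By Theorem~\ref{Thm:LongestReduction} together with Proposition~\ref{Prop:LongestFacts}(3), the identity $L(x+n_1)=L(x)+1$ holds for all $x\in M$ if and only if there is a factorization $(a',b',c')\in\mathsf{Z}(\alpha(M)n_2)$ with $b'=0$ and $a'+c'\ge\alpha(M)$; equivalently, the longest factorization of $\alpha(M)n_2$ that omits $n_2$ has length at least $\alpha(M)$. The shortest-length analogue, via the $\beta(M)$-versions of those results, instead asks that the shortest factorization of $\beta(M)n_2$ omitting $n_2$ have length at most $\beta(M)$. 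So for each of the four orderings of $\{am_1,am_2,bm_1+cm_2\}$ I first read off which generator is $n_1,n_2,n_3$, then I must (i) compute $\alpha(M)$ or $\beta(M)$ and (ii) compute the longest or shortest factorization of $\alpha(M)n_2$ or $\beta(M)n_2$ that avoids $n_2$, and finally compare the two numbers.

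Two computational engines drive every case. The first is a coprimality argument: because $\gcd(a,bm_1+cm_2)=1$, reducing any $n_2$-avoiding factorization of a multiple of $n_2$ modulo $a$ forces the coefficient of the ``mixed'' generator $bm_1+cm_2$ (or, when it is $n_2$ itself, the coefficients on the pair $am_1,am_2$) to be a multiple of $a$. Dividing the resulting relation by $a$ converts an $n_2$-avoiding factorization into an ordinary factorization in the coprime monoid $\langle m_1,m_2\rangle$ of a determined element, and it simultaneously pins down $\alpha(M)$ and $\beta(M)$: the least admissible multiplier is the one realizing the Betti relation $a(bm_1+cm_2)=b(am_1)+c(am_2)$, so that $\alpha(M)=a$ or $\beta(M)=a$ in the generic subcases. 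The second engine is the embedding-dimension-two extremal length: the trade $m_2\cdot m_1=m_1\cdot m_2$ shows that the longest factorization of $bm_1+cm_2$ in $\langle m_1,m_2\rangle$ has length $b+c+\lfloor c/m_1\rfloor(m_2-m_1)$ and the shortest has length $b+c-\lfloor b/m_2\rfloor(m_2-m_1)$, these being the values at the two ends of the admissible trade parameter. The floors and ceilings in the statement are exactly these boundary quantities.

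With the engines in hand the cases are largely mechanical. For instance, when $am_1<am_2<bm_1+cm_2$ one finds $\alpha(M)=m_1$, and the factorization $(m_2,0,0)$ of $\alpha(M)n_2$ has length $m_2>m_1=\alpha(M)$, so the identity holds unconditionally; when $am_1<bm_1+cm_2<am_2$ with $b\ne0$ one finds $\alpha(M)=a$ and the longest $n_2$-avoiding factorization has length $b+c+\lfloor c/m_1\rfloor(m_2-m_1)$, yielding the inequality $a\le b+c+\lfloor c/m_1\rfloor(m_2-m_1)$. The degenerate subcases $b=0$ (longest) and $c=0$ (shortest) must be isolated precisely because the Betti relation $an_2=bn_1+cn_3$ then loses its $n_1$- or $n_3$-term, so the minimal multiplier can no longer route through $n_1$ (resp.\ $n_3$) at cost $a$; instead one must take enough copies to clear a divisibility by $c$ (resp.\ $b$), giving $\alpha(M)=a\lceil m_1/c\rceil$ and $\beta(M)=a\lceil m_2/b\rceil$ and hence the reciprocal ceilings $\lceil m_1/c\rceil$ and $\lceil m_2/b\rceil$.

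The main obstacle is step (ii): verifying that the factorization I exhibit really is the longest (resp.\ shortest) $n_2$-avoiding factorization, not merely one of them. After reducing to $\langle m_1,m_2\rangle$, the length in the original monoid is an affine function of the single trade parameter, and I must check its monotone direction and that the nonnegativity and divisibility constraints bind exactly at the exhibited point; it is these binding constraints that decide floor versus ceiling and the direction of the final inequality. Since Proposition~\ref{Prop:LongestFacts}(3) is an equivalence, once the extremal length is identified the comparison with $\alpha(M)$ or $\beta(M)$ closes each case; the remaining work is careful bookkeeping across the four orderings and the two degenerate subcases, together with confirming in each that the exhibited generators are indeed ordered as the case assumes.
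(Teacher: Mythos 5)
Your proposal is correct and follows essentially the same route as the paper's proof: the same reduction via Theorem~\ref{Thm:LongestReduction} and Proposition~\ref{Prop:LongestFacts}(3), the same coprimality argument (dividing the $n_2$-avoiding relation by $a$ to land in $\langle m_1, m_2\rangle$) to pin down $\alpha(M)$ and $\beta(M)$, and the same trade-parameter analysis to identify the extremal $n_2$-avoiding factorization lengths, with all the values you state ($\alpha(M)=m_1$, $a$, $a\left\lceil m_1/c\right\rceil$, etc.) agreeing with the paper's. The only cosmetic difference is that you compute the degenerate subcases $b=0$ and $c=0$ directly, whereas the paper reduces them to the other cases by relabeling the parameters $(a,b,c,m_1,m_2)$; these amount to the same calculation.
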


\begin{proof}
	From \cite[p. 582]{garcia2018delta}, since $\abs{\textup{Betti}(M)} \leq 2$, we have that
	\[
		\Betti(M) = \{a(bm_1+cm_2), am_1m_2 \}
	\]
	with
	\begin{align*}
		\mathsf{Z}(a(bm_1 + cm_2)) = &\left\{\left(b-\left\lfloor\frac{b}{m_2}\right\rfloor m_2,c+\left\lfloor\frac{b}{m_2}\right\rfloor m_1, 0 \right), \dots, (b-m_2, c+m_1, 0), (b,c,0), \color{white} \right\}\\ &\color{white} \left\{ \color{black} (b+m_2, c-m_1, 0), \dots, \left(b+\left\lfloor\frac{c}{m_1}\right\rfloor m_2, c- \left\lfloor \frac{c}{m_1} \right\rfloor m_1, 0\right), (0,0,a) \right\}
	\end{align*}
	and
	\[
		\mathsf{Z}(am_1m_2) = \{(m_2, 0,0), (0,m_1,0) \}. 
	\]
	Here, we make no assumptions about the ordering of the numbers $am_1, am_2, bm_1 + cm_2$ yet. A factorization $(a_1, a_2, a_3) \in \mathsf{Z}(x)$ means $a_1 \cdot am_1 + a_2 \cdot am_2 + a_3 \cdot (bm_1 + cm_2) = x$. Once we make assumptions about the ordering of the generators, factorizations will be written so that the corresponding generators are in ascending order. 
	
	Let $c_i$ be the invariant defined in Section \ref{Sec:Betti}. Suppose that $n_i = bm_1 + cm_2$. Then $c_i = a$ by Proposition \ref{Prop:FormOfBettiElts}. Otherwise, $c_i(bm_1 + cm_2) = am_1m_2$ with $c_i < a$, but $a$ divides neither $c_i$ nor $bm_1 + cm_2$.

	Suppose that $n_j = am_1$. Then $c_j = m_2$ by Proposition \ref{Prop:FormOfBettiElts}. If this is not the case, we would have $a(bm_1 + cm_2) = c_j \cdot am_1 < am_1m_2$. Based on the factorizations of $a(bm_1+cm_2)$, this requires $c- \left\lfloor \frac{c}{m_1} \right\rfloor m_1 = 0$, so $m_1 | c$. Thus,
	\[
		a(bm_1+cm_2) = \left(b+\left\lfloor\frac{c}{m_1}\right\rfloor m_2 \right)am_1 > m_2 \cdot am_1,
	\]
	a contradiction. Similarly, if $n_k = am_2$, then $c_k = m_1$. 
	
	Now we analyze each case using Theorem \ref{Thm:LongestReduction} and Proposition \ref{Prop:LongestFacts}(3), comparing $\alpha(M)$ and the length of the longest factorization of $\alpha(M)n_2$ without involving $n_2$. 
	
	\begin{enumerate}
		\item Suppose that $am_1 < am_2 < bm_1 + cm_2$. Then we have $\alpha(M) = m_1$. This is because $c_2 = m_1$ and we have $m_1\cdot am_2 - am_1 = (m_2-1)\cdot am_1 \in M$. Since $\mathsf{Z}(am_1m_2) = \{(m_2, 0,0), (0,m_1,0) \}$, we have that $x \in M$ such that $L(x+n_1) \neq L(x) + 1$ if and only if $m_2 + 0 < m_1$, which never occurs since $m_1 < m_2$. 
		
		\item Now assume that $am_1 < bm_1 + cm_2 < am_2$ and $b \neq 0$. Then $\alpha(M) = a$ due to the facts that $c_2 = a$ and  $\left(b,0,c\right) \in \mathsf{Z}(a(bm_1+cm_2))$ with $b \neq 0$. From the factorizations of $a(bm_1+cm_2)$, the longest factorization of the form $(a_1, 0, a_3)$ is $\left(b+\left\lfloor\frac{c}{m_1}\right\rfloor m_2, 0, c- \left\lfloor \frac{c}{m_1} \right\rfloor m_1\right)$, which has length $b + c + \left\lfloor \frac{c}{m_1} \right\rfloor (m_2 - m_1)$. Therefore, there is an exception to $L(x+n_1) = L(x) + 1$ for all $x \in M$ if and only if $a > b + c + \left\lfloor \frac{c}{m_1} \right\rfloor (m_2 - m_1)$. 
		
		\item In this case, we assume that $bm_1 + cm_2 < am_1 < am_2$. We claim that $\alpha(M) = b+ \left\lceil \frac{c}{m_1}\right\rceil m_2$. We first check that 
		\[
			\left(b+\left\lceil \frac{c}{m_1}\right\rceil m_2 \right)am_1 - a(bm_1+cm_2) = \left(\left\lceil \frac{c}{m_1}\right\rceil m_1 - c\right)am_2 \in M,
		\]
		so there exists $(a_1, a_2, a_3) \in Z\left(\left(b+\left\lceil \frac{c}{m_1}\right\rceil m_2 \right)am_1\right)$ such that $a_1 \neq 0$. On the other hand, there exist $a_1, a_3 \in \N$ with $a_1 > 0$ such that 
		\[\alpha(M)\cdot am_1 = a_1(bm_1+cm_2) + a_3 \cdot am_2.\] 
		Then we have $\alpha(M) m_1 = k(bm_1 + cm_2)  + a_3 m_2 = kbm_1 + (kc+a_3)m_2$, where $k  = \frac{a_1}{a} \in \N \setminus \{0\}$ since $\gcd(a, bm_1 + cm_2) = 1$. Since $\gcd(m_1, m_2) = 1$, we have that \[\mathsf{Z}(\alpha(M)m_1) = \left\{(\alpha(M),0), (\alpha(M)-m_2, m_1), \dots, \left(\alpha(M)-\left\lfloor\frac{\alpha(M)}{m_2}\right\rfloor m_2, \left\lfloor\frac{\alpha(M)}{m_2}\right\rfloor m_1 \right) \right\}\] in $\langle m_1, m_2 \rangle$. Therefore, $\alpha(M) - gm_2 = kb$ and $gm_1 = kc+a_3$ for some $g \in \N$. This implies that $g \geq \frac{c}{m_1}$ so $g \geq \left\lceil \frac{c}{m_1}\right\rceil$. Now we see that $\alpha(M) = kb + gm_2 \geq b + \left\lceil \frac{c}{m_1}\right\rceil m_2$. Thus, $\alpha(M) = b + \left\lceil \frac{c}{m_1}\right\rceil m_2$.

		 We now want to show that $\left(a, 0, \left\lceil \frac{c}{m_1}\right\rceil m_1 -c\right)$ is the longest factorization of $\alpha(M) \cdot am_1$ without involving $am_1$. Suppose again that we have $(a_1,0, a_3) \in \mathsf{Z}(\alpha(M)\cdot am_1)$ with $a_1 > 0$. Then from before, we have $\alpha(M) = kb + gm_2$, where $k  = \frac{a_1}{a} \in \N \setminus \{0\}$ and $g \in \N$ such that $g \geq \left\lceil \frac{c}{m_1}\right\rceil $. Assume for a contradiction that $a_1 > a$. Then $k \geq 2$, so $\alpha(M) > b + \left\lceil \frac{c}{m_1}\right\rceil m_2$, which is impossible. Thus, $\left(a, 0, \left\lceil \frac{c}{m_1}\right\rceil m_1 -c\right)$ is the longest factorization of $\alpha(M) \cdot am_1$ without involving $am_1$, and we can conclude that $L(x+n_1) = L(x) + 1$ for all $x\in M$ if and only if $b+ \left\lceil \frac{c}{m_1}\right\rceil m_2 \leq  a + \left\lceil \frac{c}{m_1}\right\rceil m_1 - c$.

		\item Suppose that $am_1 < bm_1 + cm_2 < am_2$ and $b = 0$. Let $a' = m_2$, $m_1' = c$, $m_2' = a$, $b' = 0$, and $c' = m_1$. We now have $c'm_2' < a'm_1' < a'm_2'$. 
		
		Now we verify that these parameters are valid. We have that $1 < m_1' < m_2'$ since $c m_2 < am_2$. It must be that $\gcd(m_1', m_2') = 1$ because otherwise $\gcd(n_1, n_2, n_3) \neq 1$. Moreover, $a' = m_2 \geq 2$ and $b' + c' = m_1 \geq 2$. Lastly, $\gcd(a', b'm_1' + c'm_2') = \gcd(m_2, am_1) = 1$ since $\gcd(a, cm_2) = 1$ and $\gcd(m_1, m_2)= 1$. 
		
		We are in case 3 with this new form. Therefore, $L(x+n_1) = L(x) + 1$ for all $x \in M$ if and only if $b' +  \left\lceil \frac{c'}{m_1'}\right\rceil m_2' \leq \left\lceil \frac{c'}{m_1'}\right\rceil m_1' + a' - c'$, which happens if and only if
		\[
			\left\lceil \frac{m_1}{c}\right\rceil a \leq \left\lceil \frac{m_1}{c}\right\rceil c + m_2 - m_1. 
		\]
	\end{enumerate}

	For the shortest factorization lengths, we do a similar analysis.
	\begin{enumerate}
		\item Under the assumptions of  $am_1 < am_2 < bm_1 + cm_2$, we calculate $\beta(M) = c + \left\lceil \frac{b}{m_2}\right\rceil m_1$. We know there exist $a_1, a_3 \in \N$ with $a_3 > 0$ such that
		\[
			\beta(M) \cdot am_2 = a_1 \cdot am_1 + a_3  (bm_1 + cm_2). 
		\]
		Due to the fact that $\gcd(a, bm_1+cm_2) = 1$, dividing by $a$ yields
		\[
			\beta(M)m_2 = (a_1+kb) m_1 + kc m_2,
		\]
		where $k = \frac{a_3}{a} \in \N \setminus \{0\}$. Now we have that $\beta(M) - gm_1 = kc$ and $gm_2 = a_1 + kb$ for some $g \in \N$. The latter implies that $g \geq \left\lceil \frac{b}{m_2}\right\rceil$ and thus
		\[
			\beta(M) = kc + gm_1 \geq c + \left\lceil \frac{b}{m_2}\right\rceil m_1.
		\]
		Furthermore, we have
		\[
			\left(c + \left\lceil \frac{b}{m_2}\right\rceil m_1 \right) am_2 - a(bm_1 + cm_2) = \left( \left\lceil \frac{b}{m_2}\right\rceil m_2 - b\right)am_1 \in M, 
		\]
		proving that $\beta(M) = c + \left\lceil \frac{b}{m_2}\right\rceil m_1$. 
		
		Now we claim that $\left(\left\lceil \frac{b}{m_2}\right\rceil m_2 - b, 0, a \right)$ is the shortest factorization of $\beta(M) \cdot am_2$ without involving $am_2$. Suppose we have such a factorization of $\beta(M) \cdot am_2$. Then we have $a_1, a_3 \in \N$ with $a_3 > 0$ such that $\beta(M) \cdot am_2 = a_1 \cdot am_1 + a_3  (bm_1 + cm_2)$. Following the calculations that were previously done, we get that $\beta(M) = kc + gm_1$, where $g \geq \left\lceil \frac{b}{m_2}\right\rceil$ and $k = \frac{a_3}{a}$. If $(a_1, 0, a_3)$ is a strictly shorter factorization than $\left(\left\lceil \frac{b}{m_2}\right\rceil m_2 - b, 0, a \right)$, then $a_3 > a$, so $k \geq 2$. This implies that $\beta(M) = kc + gm_1 > c + \left\lceil \frac{b}{m_2}\right\rceil m_1 = \beta(M)$, a contradiction. Thus, $\ell(x+n_3) = \ell(x) + 1$ for all $x \in M$ if and only if \[c + \left\lceil \frac{b}{m_2}\right\rceil m_1 \geq \left\lceil \frac{b}{m_2}\right\rceil m_2 - b + a.\]
		
		\item Consider the case where $am_1 < bm_1 + cm_2 < am_2$ and $c \neq 0$. Then $c_2 = a$ and $(b,0,c) \in \mathsf{Z}(a(bm_1+cm_2))$ with $c \neq 0$, so $\beta(M) = a$. Now we look at all the factorizations of $a(bm_1+cm_2)$ and see that there is an exception to $\ell(x+n_3) = \ell(x) + 1$ if and only if
		\[
			\beta(M) < b + c + \left\lfloor\frac{b}{m_2}\right\rfloor(m_1 - m_2)
		\]
		since $\left(b-\left\lfloor\frac{b}{m_2}\right\rfloor m_2,0,c+\left\lfloor\frac{b}{m_2}\right\rfloor m_1 \right)$ is the shortest factorization of $a(bm_1+cm_2)$ not involving $bm_1 + cm_2$. 
		
		\item Assuming $bm_1 + cm_2 < am_1 < am_2$, we calculate $\beta(M) = m_2$ since $c_2 = m_2$ and $m_2 \cdot am_1 - am_2 = (m_1-1) am_2 \in M$. Then by the factorizations of $\beta(M) \cdot am_1 = am_1m_2$, we have an exception to $\ell(x+n_3) = \ell(x) + 1$ if and only if $\beta(M) = m_2 < m_1$, which is impossible. 
		
		\item Now suppose that $am_1 < bm_1 + cm_2 < am_2$ and $c = 0$. Then we can check that the parameters $a' =m_1, m_1' = a, m_2' = b, b' = m_2, c' = 0$ are valid parameters that put this numerical monoid in case 1. We then apply the result of case 1. 
	\end{enumerate}
\end{proof}

We now provide some examples of numerical monoids of embedding dimension three with at most two Betti elements that satisfy the inequalities in Theorem \ref{Thm:2Betti} and some numerical monoids that do not satisfy the inequalities, showing that the inequalities in Theorem \ref{Thm:2Betti} are necessary to state. We start with cases for $L(x)$ first. 

\begin{example}
	In this example, we consider the case where $am_1 < bm_1+cm_2 < am_2$ and $b \neq 0$. 
	
	The numerical monoid $\langle 15, 19, 21 \rangle$ has parameters $(a,b,c,m_1,m_2) = (3,1,2,5,7)$. These parameters satisfy $a \leq b + c + \left\lfloor \frac{c}{m_1} \right\rfloor (m_2 - m_1)$, so $L(x+ 15) = L(x) + 1$ for all $x$.
	
	However, $\langle 33,34,55 \rangle$ has parameters $(a,b,c,m_1,m_2) = (11,3,5,3,5)$. These parameters do not satisfy $a \leq b + c + \left\lfloor \frac{c}{m_1} \right\rfloor (m_2 - m_1)$. In fact, $L(11 \cdot 34) = 11$ but $L(11\cdot 34 - 33) + 1 = 10$. 
\end{example}

\begin{example}
	Here, we are looking at $bm_1 + cm_2 < am_1 < am_2$. 
	
	The monoid $\langle 23,33,44 \rangle$ has parameters $(a,b,c,m_1,m_2) = (11,1,5,3,4)$. The parameters satisfy the inequality $b+ \left\lceil \frac{c}{m_1}\right\rceil m_2 \leq  a + \left\lceil \frac{c}{m_1}\right\rceil m_1 - c$, so $L(x + 23) = L(x) + 1$ for all $x$. 
	
	On the other hand, the monoid $\langle 8,9,15 \rangle$ has parameters $(a,b,c,m_1,m_2) = (3,1,1,3,5)$. These parameters do not satisfy $b+ \left\lceil \frac{c}{m_1}\right\rceil m_2 \leq  a + \left\lceil \frac{c}{m_1}\right\rceil m_1 - c$. We have $L(6\cdot 9) = 6$ but $L(6\cdot 9 - 8) + 1 = 5$.
\end{example}

\begin{example}
	Now we consider the case where $am_1 < bm_1 + cm_2 < am_2$ and $b = 0$. 
	
	For the monoid $\langle 15,22,33 \rangle$, we have the parameters $(a,b,c,m_1,m_2) = (3,0,2,5,11)$. This satisfies $\left\lceil \frac{m_1}{c}\right\rceil a \leq \left\lceil \frac{m_1}{c}\right\rceil c + m_2 - m_1$, so $L(x + 15) = L(x) + 1$ for all $x$. 
	
	The monoid $\langle 65,66,143 \rangle$ has parameters $(a,b,c,m_1,m_2) = (13,0,6,5,11)$. This does not satisfy $\left\lceil \frac{m_1}{c}\right\rceil a \leq \left\lceil \frac{m_1}{c}\right\rceil c + m_2 - m_1$. In this monoid, we have $L(13 \cdot 66) = 13$ but $L(13\cdot 66 - 65) + 1 = 12$.
\end{example}

Now we see examples for $\ell(x)$. 

\begin{example}
	We look at $am_1 < am_2 < bm_1 + cm_2$. 
	
	Consider the monoid $\langle 35, 40, 52 \rangle$. This monoid is given by the parameters $(a,b,c,m_1,m_2) = (5,4,3,7,8)$. With these parameters, the inequality $c + \left\lceil \frac{b}{m_2}\right\rceil m_1 \geq \left\lceil \frac{b}{m_2}\right\rceil m_2 - b + a$ holds, so $\ell(x+52) = \ell(x) + 1$ for all $x$. 
	
	Now consider $\langle 35,55,61 \rangle$, which is given by $(a,b,c,m_1,m_2) = (5,4,3,7,11)$. The inequality $c + \left\lceil \frac{b}{m_2}\right\rceil m_1 \geq \left\lceil \frac{b}{m_2}\right\rceil m_2 - b + a$ does not hold. In particular, we have $\ell(10 \cdot 55) = 10$ but $\ell(10 \cdot 55 - 61) + 1 = 12$. 
\end{example}

\begin{example}
	Now consider the case $am_1 < bm_1 + cm_2 < am_2$ and $ c \neq 0$. 
	
	The parameters $(a,b,c,m_1,m_2) = (3,2,1,5,7)$ yield the monoid $\langle 15,17,21 \rangle$. These parameters satisfy $a \geq b + c + \left\lfloor\frac{b}{m_2}\right\rfloor (m_2 - m_1)$, so $\ell(x+21) = \ell(x) + 1$ for all $x$. 
	
	The monoid $\langle 15,23,24 \rangle$ has parameters $(a,b,c,m_1, m_2) = (3,3,1,5,8)$. These parameters do not satisfy $a \geq b + c + \left\lfloor\frac{b}{m_2}\right\rfloor (m_2 - m_1)$. We see that $\ell(3 \cdot 23) = 3$ but $\ell(3 \cdot 23 -24) + 1 = 4$. 
\end{example}

\begin{example}
	Consider the case where $am_1 < bm_1 + cm_2 < am_2$ and $c = 0$. 
	
	We have the monoid $\langle 12,16,21 \rangle$. This monoid is given by the parameters $(a,b,c,m_1, m_2) = (3,4,0,4,7)$. This satisfies $ \left\lceil \frac{m_2}{b}\right\rceil a \geq \left\lceil \frac{m_2}{b}\right\rceil b - m_2 + m_1$. Therefore, $\ell(x+21) = \ell(x) +1$ for all $x$. 
	
	The monoid $\langle 35,75,77\rangle$ is given by the parameters $(a,b,c,m_1,m_2) = (7,15,0,5,11)$. The inequality $ \left\lceil \frac{m_2}{b}\right\rceil a \geq \left\lceil \frac{m_2}{b}\right\rceil b - m_2 + m_1$ does not hold. We have $\ell(7\cdot 75) = 7$ but $\ell(7\cdot 75 - 77) + 1 = 9$. 
\end{example}

Putting the results about numerical monoids of embedding dimension three together, we find that there is only a particular family of numerical monoids $\langle n_1, n_2, n_3 \rangle$ with $n_1 < n_2 < n_3$ of embedding dimension three such that exceptions to both formulas $L(x+n_1) = L(x) + 1$ and $\ell(x+n_3) = \ell(x) + 1$ are found. 

\begin{proposition}
	Let $M = \langle n_1, n_2, n_3 \rangle$ with $n_1< n_2 < n_3$ be a numerical monoid with embedding dimension three. Then $L(x+n_1) = L(x) + 1$ for all $x \in M$ or $\ell(x+n_3) = \ell(x) + 1$ for all $x \in M$.
\end{proposition}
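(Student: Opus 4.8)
The plan is to split the argument according to the value of $\abs{\Betti(M)}$ and invoke the two structural theorems already proved in this section. If $\abs{\Betti(M)} = 3$, the result is essentially immediate from Theorem \ref{Thm:Longest0Matrix}: that theorem states that $L(x+n_1) = L(x)+1$ holds for all $x$ if and only if $r_{12} + r_{32} \leq r_{21} + r_{23}$, and that $\ell(x+n_3) = \ell(x)+1$ holds for all $x$ if and only if $r_{12} + r_{32} \geq r_{21} + r_{23}$. Since these two inequalities are complementary, at least one of them always holds, so at least one of the two identities holds for all $x \in M$, and this case needs no further work.

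Next I would treat the case $\abs{\Betti(M)} \leq 2$, writing $M = \langle am_1, am_2, bm_1 + cm_2\rangle$ as guaranteed by Theorem \ref{Thm:2BettiForm}. Because $m_1 < m_2$ forces $am_1 < am_2$, the generator $bm_1 + cm_2$ occupies one of exactly three positions, matching the case divisions of Theorem \ref{Thm:2Betti}. Two of these are handled instantly: in the ordering $am_1 < am_2 < bm_1 + cm_2$, item 1 of the longest-length part of Theorem \ref{Thm:2Betti} gives $L(x+n_1) = L(x)+1$ for all $x$ unconditionally; and in the ordering $bm_1 + cm_2 < am_1 < am_2$, item 3 of the shortest-length part gives $\ell(x+n_3) = \ell(x)+1$ for all $x$ unconditionally. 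In each of these orderings one identity holds outright, so the conclusion follows.

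The remaining and central case is the ordering $am_1 < bm_1 + cm_2 < am_2$, where both identities carry genuine conditions that must be compared; here I would argue by contradiction, assuming both identities fail. When $b \neq 0$ and $c \neq 0$, item 2 of each part applies: failure of the longest-length identity gives $a > b + c + \lfloor c/m_1\rfloor(m_2 - m_1)$, while failure of the shortest-length identity gives $a < b + c + \lfloor b/m_2\rfloor(m_1 - m_2)$. Since $m_1 < m_2$, the correction term $\lfloor c/m_1\rfloor(m_2 - m_1)$ is nonnegative and $\lfloor b/m_2\rfloor(m_1 - m_2)$ is nonpositive, so the first inequality forces $a > b + c$ and the second forces $a < b + c$, a contradiction. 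When $b = 0$ (hence $c \geq 2$), item 2 of the shortest-length part simplifies to $a \geq c$ and item 4 of the longest-length part to $\lceil m_1/c\rceil(a - c) \leq m_2 - m_1$; if $a \geq c$ the shortest identity holds, and if $a < c$ the left side is negative while the right side is positive, so the longest identity holds. The subcase $c = 0$ (hence $b \geq 2$) is symmetric, with the two identities and items 2 and 4 interchanged.

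The main obstacle is the single ordering $am_1 < bm_1 + cm_2 < am_2$, where I must pin down the signs of the floor and ceiling correction terms so that the two failure conditions become mutually exclusive; once those signs are settled, each subcase collapses to a one-line comparison. Every other configuration is immediate, since one of the two identities holds with no condition attached, so assembling the pieces completes the proof.
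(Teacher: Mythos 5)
Your proposal is correct and follows essentially the same route as the paper: split on $\abs{\Betti(M)}$, use the complementary inequalities of Theorem \ref{Thm:Longest0Matrix} when there are three Betti elements, and otherwise run through the orderings of Theorem \ref{Thm:2Betti}, with the only substantive work in the ordering $am_1 < bm_1 + cm_2 < am_2$, resolved by the same sign analysis of the floor/ceiling correction terms. Your handling of the $b = 0$ and $c = 0$ subcases as a clean dichotomy ($a \geq c$ versus $a < c$, and symmetrically) is a slightly tidier presentation than the paper's contradiction argument, but it is the same underlying computation.
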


\begin{proof}
	If $M$ has three Betti elements, then an exception to both formulas about the longest and shortest factorization lengths would require $r_{12} + r_{32} > r_{21} + r_{23}$ and $r_{12} + r_{32} < r_{21} + r_{23}$, which is impossible. 
	
	Now we assume that $M$ has at most two Betti elements. Using the notation and results of Theorem \ref{Thm:2Betti}, we see that we must have $am_1 < bm_1+cm_2 < am_2$ in order to have a chance to have $x,y \in M$ such that $L(x+n_1) \neq L(x) + 1$ and $\ell(y+n_3) \neq \ell(y) + 1$. If $b\neq 0$ and $c \neq 0$, then there exist $x,y \in M$ such that $L(x+n_1) \neq L(x) + 1$ and $\ell(y+n_3) \neq \ell(y) + 1$ if and only if 
	\[
		b+c+\left\lfloor\frac{c}{m_1}\right\rfloor(m_2 - m_1) < a < b+c+\left\lfloor\frac{b}{m_2}\right\rfloor(m_1 - m_2) ,
	\]
	but this implies $\left\lfloor\frac{c}{m_1}\right\rfloor(m_2 - m_1) < \left\lfloor\frac{b}{m_2}\right\rfloor(m_1 - m_2)$, which is impossible since the left-hand side is positive and the right-hand side is negative. If $b \neq 0$ and $c = 0$, then there exist $x,y \in M$ such that $L(x+n_1) \neq L(x) + 1$ and $\ell(y+n_3) \neq \ell(y) + 1$ if and only if 
	\[
		a > b+c+\left\lfloor\frac{b}{m_2}\right\rfloor(m_2 - m_1) \text{\quad and \quad}  \left\lceil \frac{m_2}{b}\right\rceil a < \left\lceil \frac{m_2}{b}\right\rceil b - m_2 + m_1.
	\]
	This implies that $\left\lceil \frac{m_2}{b}\right\rceil b - m_2 + m_1 > \left\lceil \frac{m_2}{b}\right\rceil b$, which implies that $m_1 > m_2$, a contradiction. Similarly, assuming $b = 0$, $c\neq 0$, and that there exist $x,y \in M$ such that $L(x+n_1) \neq L(x) + 1$ and $\ell(y+n_3) \neq \ell(y) + 1$ leads to a contradiction. 
\end{proof}

\bibliographystyle{amsplain}
\bibliography{references}

\providecommand{\bysame}{\leavevmode\hbox to3em{\hrulefill}\thinspace}
\providecommand{\MR}{\relax\ifhmode\unskip\space\fi MR }
\providecommand{\MRhref}[2]{%
  \href{http://www.ams.org/mathscinet-getitem?mr=#1}{#2}
}
\providecommand{\href}[2]{#2}
\begin{thebibliography}{1}

\bibitem{barron2017set}
T.~Barron, C.~O’Neill, and R.~Pelayo, \emph{On the set of elasticities in
  numerical monoids}, Semigroup Forum \textbf{94} (2017), 37--50.

\bibitem{chapman2018factoring}
S.~T. Chapman and C.~O’Neill, \emph{Factoring in the {C}hicken {M}c{N}ugget
  monoid}, Math. Mag. \textbf{91} (2018), 323--336.

\bibitem{garcia2018delta}
P.~A. Garc{\'\i}a-S{\'a}nchez, D.~Llena, and A.~Moscariello, \emph{Delta sets
  for nonsymmetric numerical semigroups with embedding dimension three}, Forum
  Math. \textbf{30} (2018), 15--30.

\bibitem{rosales2004numerical}
P.~A. Garc{\'\i}a-S{\'a}nchez and J.~C. Rosales, \emph{Numerical semigroups
  with embedding dimension three}, Arch. Math. \textbf{83} (2004), 488--496.

\end{thebibliography}

\end{document}